\numberwithin{equation}{section} 
\numberwithin{figure}{section} 
\theoremstyle{plain}
\newtheorem{thm}{Theorem}[section]
  \theoremstyle{plain}
  \newtheorem{prop}[thm]{Proposition}
  \theoremstyle{plain}
  \newtheorem{lem}[thm]{Lemma}
  \theoremstyle{remark}
  \newtheorem{rem}[thm]{Remark}
  \theoremstyle{plain}
  \newtheorem{cor}[thm]{Corollary}
 \theoremstyle{definition}
  \newtheorem{example}[thm]{Example}
  \theoremstyle{remark}
  \newtheorem*{acknowledgement*}{Acknowledgement}
\begin{document}

\title[Cosine Transform of Distributions for Complex Minkowski Spaces]{On the Range of Cosine Transform of Distributions for Torus-invariant
Complex Minkowski Spaces }

\author{Yang Liu}

\date{July 19, 2009}
\begin{abstract}
In this paper, we study the ranges of (absolute value) cosine transforms
for which we give a proof for an extended surjectivity theorem by
making applications of the Fredholm's theorem in integral equations,
and show a Hermitian characterization theorem for complex Minkowski
metrics on $\mathbb{C}^{n}$. Moreover, we parametrize the Grassmannian
in an elementary linear algebra approach, and give a characterization
on the image of the (absolute value) cosine transform on the space
of distributions on the Grassmannian $Gr_{2}(\mathbb{C}^{2})$, by
computing the coefficients in the Legendre series expansion of distributions.
\end{abstract}

\keywords{cosine transform, }

\subjclass[2000]{28A75, 32Q15.}

\maketitle

\section{On the Fredholm Theory in Integral Equations}

Integral equations as different looks from differential equations
appear in mathematical physics and fluid mechanics (see for instance
\cite{Vertlet}) and other fields. A groundbreaking work in the theory
of integral equation was done by Fredholm, \cite{Fredholm}, in 1903.
The following is one of his main theorems on the existence of solutions
to Fredholm integral equations of the second kind
\begin{thm}
\label{thm:frethm}Let $K(x,y)$ and $f(x)$ be real valued functions,
$\lambda\in\mathbb{R}$ and $K(x,y)\in L^{2}([a,b]^{2})$. Then there
exist solutions to the Fredholm integral equation of the second kind
\begin{equation}
\lambda\phi(x)-\int_{a}^{b}K(x,y)\phi(y)\, dy=f(x)\label{eq:integeua}\end{equation}
if and only if $f(x)$ satisfies

\begin{equation}
\int_{a}^{b}\psi(x)f(x)\, dx=0\end{equation}
for any solution $\psi(x)$ to the homogeneous integral equation

\begin{equation}
\lambda\psi(y)-\int_{a}^{b}\psi(y)K(x,y)\, dx=0.\end{equation}

\end{thm}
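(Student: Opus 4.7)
The plan is to recast the integral equation as an abstract operator equation on the Hilbert space $L^{2}[a,b]$ and to invoke the Fredholm alternative for compact operators. First, I would introduce the integral operator $T:L^{2}[a,b]\to L^{2}[a,b]$ defined by $(T\phi)(x)=\int_{a}^{b}K(x,y)\phi(y)\,dy$. Because $K\in L^{2}([a,b]^{2})$, Cauchy--Schwarz together with Fubini's theorem shows that $T$ is Hilbert--Schmidt with $\|T\|_{HS}=\|K\|_{L^{2}([a,b]^{2})}$, and in particular compact. Equation \eqref{eq:integeua} then reads $(\lambda I-T)\phi=f$, and the question is exactly when $f$ lies in the range of $\lambda I-T$.

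Next, since $K$ is real-valued, a direct computation identifies the Hilbert-space adjoint as $(T^{*}\psi)(y)=\int_{a}^{b}K(x,y)\psi(x)\,dx$, so the homogeneous equation in the theorem statement is precisely $(\lambda I-T^{*})\psi=0$ (reading $\psi(x)$ in place of what appears to be a typo $\psi(y)$ under the integral). The asserted integral orthogonality $\int_{a}^{b}f(x)\psi(x)\,dx=0$ is then nothing but the $L^{2}$ inner-product condition $\langle f,\psi\rangle=0$ for every $\psi\in\ker(\lambda I-T^{*})$.

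The heart of the argument is the Riesz--Schauder theorem: for a compact operator $T$ on a Hilbert space and any scalar $\lambda\neq 0$, the operator $\lambda I-T$ has closed range and one has the orthogonal decomposition
\[
\mathrm{Range}(\lambda I-T)=\bigl(\ker(\lambda I-T^{*})\bigr)^{\perp}.
\]
Combined with the identifications above, this immediately yields the equivalence claimed in the theorem.

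The main technical obstacle is precisely the closed-range portion of Riesz--Schauder. I would deduce it from two classical facts: finite-dimensionality of $\ker(\lambda I-T)$ (which uses that $T$ compact implies the unit ball of this kernel is relatively compact, hence the kernel is locally compact and therefore finite-dimensional), and a compactness argument extracting a convergent subsequence from a minimizing sequence modulo the kernel. I would also flag that the case $\lambda=0$ has to be excluded, since then $T\phi=-f$ is a first-kind equation whose range is typically not closed and the alternative fails; this appears to be an implicit hypothesis in the statement.
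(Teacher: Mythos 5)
The paper offers no proof of this statement at all: it is quoted as one of Fredholm's classical 1903 theorems and immediately cited to \cite{Fredholm}, so there is nothing internal to compare your argument against. Your proposal supplies the standard modern proof and it is essentially correct: realizing the kernel as a Hilbert--Schmidt (hence compact) operator $T$ on $L^{2}[a,b]$, identifying the transposed homogeneous equation with $(\lambda I-T^{*})\psi=0$, and invoking the Riesz--Schauder fact that $\lambda I-T$ has closed range equal to $\ker(\lambda I-T^{*})^{\perp}$ for $\lambda\neq 0$. Two of your side remarks are genuinely important and worth keeping: first, the hypothesis $\lambda\neq 0$ is indeed missing from the statement as printed, and the theorem is false without it (for $\lambda=0$ the range of $T$ need not be closed, so the orthogonality condition against $\ker T^{*}$ does not characterize solvability); this matters for the paper, since the surjectivity theorem in Section 3 applies the result to an equation of the first kind, $\int K(\eta,\bar\eta)f(\eta)\,d\eta=F$, i.e.\ precisely the excluded case $\lambda=0$. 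Second, you are right that the integrand $\psi(y)K(x,y)$ in the homogeneous equation should read $\psi(x)K(x,y)$ for the statement to be the adjoint equation; as literally written the equation is degenerate. Your sketch of the closed-range step (finite-dimensional kernel plus a minimizing-sequence compactness argument) is the standard one and is fine at this level of detail.
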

As for solving integral equations, it is not hard to solve Fredholm
integral equations with separable variables, for that and some other
types of integral equations one can see \cite{A}. One can also use
Fourier on convolution to express solution explicitly if the integral
in \prettyref{eq:integeua} is a convolution.

\section{On $U(1)\times U(1)$-Invariant Complex Finsler Metrics and $U(1)\times U(1)$-Orbits
of $Gr_{2}(\mathbb{C}^{2})$}

Given a complex Finsler space $(\mathbb{C}^{2},F)$, where $F$ is
a complex Finsler metric. One of the main topics in integral geometry
is to find the Crofton measures for Finsler metrics. However, there
is an important class of Finsler metrics, $U(1)\times U(1)$ invariant
complex ones. 

If $F$ is a $U(1)\times U(1)$ invariant complex Finsler metric,
then \begin{equation}
\bar{F}:=F|_{(\mathbb{R}\times\left\{ 0\right\} )\oplus(\mathbb{R}\times\left\{ 0\right\} )}\label{eq:res}\end{equation}
is a Finsler metric on $\mathbb{R}^{2}$. Conversely, one can extend
a Finsler metric on $\mathbb{R}^{2}$ to get a $U(1)\times U(1)$
invariant complex Finsler metric on $\mathbb{C}^{2}$.

For the Crofton measure of $U(1)\times U(1)$ of invariant complex
Finsler metric, we have the following
\begin{thm}[Invariance property of Crofton measure]
The Crofton measure for $U(1)\times U(1)$ invariant complex Finsler
metric $F$ on \textup{$\mathbb{C}^{2}$ is }$U(1)\times U(1)$ invariant. \end{thm}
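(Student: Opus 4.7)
Set $G = U(1)\times U(1)$, acting on $\mathbb{C}^2$ by the coordinatewise rotation $(e^{i\theta_1},e^{i\theta_2})\cdot(z_1,z_2)=(e^{i\theta_1}z_1,e^{i\theta_2}z_2)$. This action is by real linear isometries of the standard Euclidean pairing on $\mathbb{C}^2\simeq\mathbb{R}^4$, so the induced action on the real Grassmannian $\mathcal G$ on which the Crofton measure lives (e.g.\ the real hyperplane Grassmannian, or equivalently the unit sphere modulo antipode in the cosine-transform formulation) is by homeomorphisms preserving the cosine kernel $|\langle v,n\rangle|$. My plan is to push a given Crofton measure around by $G$ and then average over the torus.

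Concretely, let $\mu$ be a Crofton measure for $F$, so that
\[
F(v)=\int_{\mathcal G}|\langle v,n_H\rangle|\,d\mu(H)\qquad\text{for every }v\in\mathbb{C}^2.
\]
For each $g\in G$, the $G$-invariance of $F$ together with the isometry property of the action yields
\[
F(v)=F(g^{-1}v)=\int_{\mathcal G}|\langle v,gn_H\rangle|\,d\mu(H)=\int_{\mathcal G}|\langle v,n_{H'}\rangle|\,d(g_{*}\mu)(H'),
\]
so that each pushforward $g_{*}\mu$ is again a Crofton measure for $F$. Because $G$ is compact I would then form the normalized Haar average
\[
\tilde\mu=\int_G g_{*}\mu\,dg,
\]
which is $G$-invariant by construction and, by Fubini together with the linearity of the cosine transform, continues to represent $F$. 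This produces the desired $G$-invariant Crofton measure.

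The main obstacle is really one of interpretation rather than of computation. Since Crofton measures are in general not unique on the full Grassmannian, the statement has to be read either as \emph{``there exists a $G$-invariant Crofton measure,''} which the averaging argument above supplies, or else be coupled with a uniqueness result (for instance injectivity of the cosine transform on $G$-invariant measures, of the type developed elsewhere in the paper) to upgrade ``a'' to ``the.'' The only technical check is that $g\mapsto g_{*}\mu$ is measurable and that the Haar integral commutes with integration against $|\langle v,\cdot\rangle|$; both follow from continuity of the $G$-action and a mild integrability bound on $\mu$, so they should not cause serious difficulty.
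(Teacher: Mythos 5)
Your first step is exactly the paper's: you show that for each $g\in U(1)\times U(1)$ the pushforward $g_{*}\mu$ is again a Crofton measure for $F$, by combining the $G$-invariance of $F$ with a change of variables in the cosine-transform representation. Where you diverge is in how you get from there to invariance. The paper does \emph{not} average: it invokes the injectivity of the cosine transform (Proposition 3.4.12 in Groemer, applied to the density $f(\xi_{1},\xi_{2},\eta)$ of $\mu$) to conclude directly that $f(\xi_{1},\xi_{2},\eta)=f(\xi_{1}+\tilde{\xi}_{1},\xi_{2}+\tilde{\xi}_{2},\eta)$, i.e.\ that $g_{*}\mu=\mu$ for every $g$. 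That is what legitimizes the definite article in the statement: the given Crofton measure is itself invariant. Your Haar-averaging construction is correct and softer --- it needs no injectivity and would survive in settings where the cosine transform has a kernel --- but, as you yourself observe, it only produces \emph{some} invariant Crofton measure rather than showing that \emph{the} one you started with is invariant. You correctly identified injectivity as the ingredient needed to close that gap; since the paper has that tool available (and uses it), the cleaner move is to apply it to $g_{*}\mu$ and $\mu$ directly, which makes the averaging unnecessary. So: same key first step, genuinely different second step, with the paper's route yielding the stronger conclusion and yours yielding a more robust but weaker one.
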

\begin{proof}
Let $\mu$ be the Crofton measure for the $U(1)\times U(1)$ invariant
complex Finsler metric $F$ on $\mathbb{C}^{2}$ and $d\mu=f(\xi_{1},\xi_{2},\eta)d\xi_{1}d\xi_{2}d\eta$,
then for any $(\bar{z},\bar{w})\in S^{3}$, then we have $F(z,w)=F(e^{i\tilde{\xi}_{1}}z,e^{i\tilde{\xi}_{2}}w)$
for any $(e^{i\tilde{\xi}_{1}},e^{i\tilde{\xi}_{2}})\in U(1)\times U(1)$.

On one hand, \begin{equation}
\begin{array}{lll}
F(z,w) & = & \int_{0}^{2\pi}\int_{0}^{2\pi}\int_{0}^{2\pi}|\cos(\xi_{1}-\bar{\xi}_{1})\cos\eta+\cos(\xi_{2}-\bar{\xi}_{2})\sin\eta|\\
 &  & \qquad\qquad\qquad\qquad\qquad\qquad\qquad\qquad\cdot f(\xi_{1},\xi_{2},\eta)d\xi_{1}d\xi_{2}d\eta\end{array}\end{equation}

On the other hand, we know for any $\mbox{(\ensuremath{\bar{z}},\ensuremath{\bar{w}})=(\ensuremath{e^{i\bar{\xi}_{1}}\cos\bar{\eta}},\ensuremath{e^{i\bar{\xi}_{2}}\sin\bar{\eta}})\ensuremath{\in S^{3}}}$,\begin{equation}
\begin{array}{lll}
F(\bar{z},\bar{w}) & = & \int_{0}^{2\pi}\int_{0}^{2\pi}\int_{0}^{2\pi}|\cos(\xi_{1}-\bar{\xi}_{1}-\tilde{\xi}_{1})\cos\eta+\cos(\xi_{2}-\bar{\xi}_{2}-\tilde{\xi}_{2})\sin\eta|\\
 &  & \qquad\qquad\qquad\qquad\qquad\qquad\qquad\qquad\cdot f(\xi_{1},\xi_{2},\eta)d\xi_{1}d\xi_{2}d\eta\\
 & = & \int_{0}^{2\pi}\int_{0}^{2\pi}\int_{0}^{2\pi}|\cos(\xi_{1}-\bar{\xi}_{1})\cos\eta+\cos(\xi_{2}-\bar{\xi}_{2}-\tilde{\xi}_{2})\sin\eta|\\
 &  & \qquad\qquad\qquad\qquad\qquad\qquad\qquad\cdot f(\xi_{1}+\tilde{\xi}_{1},\xi_{2}+\tilde{\xi}_{2},\eta)d\xi_{1}d\xi_{2}d\eta\end{array}\end{equation}
by change of variables. Using the injectivity theorem of cosine transform,
Proposition 3.4.12 in \cite{G}, from $F(z,w)=F(e^{i\tilde{\xi}_{1}}z,e^{i\tilde{\xi}_{2}}w)$
we have \begin{equation}
f(\xi_{1},\xi_{2},\eta)=f(\xi_{1}+\tilde{\xi}_{1},\xi_{2}+\tilde{\xi}_{2},\eta)\end{equation}
for any $\tilde{\xi}_{1},\tilde{\xi}_{2}\in[0,2\pi]$.
\end{proof}
Since the function $f$ is independent of $\xi_{1}$ and $\xi_{2}$
by the invariance of the complex norm under $U(1)\times U(1)$ action,
so it can be denoted as $f(\eta)$.

In the next, we consider the action of torus $U(1)\times U(1)$ on
the space of real $2$-planes in the complex plane, $Gr_{2}(\mathbb{C}^{2})$.
The following proposition about the orbits of torus action was proposed
by Joe Fu, but here we provide a proof with linear algebra flavor 
\begin{prop}[Orbits parametrization of the Grassmannian]
\label{pro:The-orbit-of}The orbits of $Gr_{2}(\mathbb{C}^{2})$
acted by torus actions can be parametrized as \begin{equation}
\left\{ span_{\mathbb{R}}((\cos\psi,\sin\psi),(\sqrt{-1}\cos(\theta+\psi),\sqrt{-1}\sin(\theta+\psi))):(\theta,\psi)\in[0,\frac{\pi}{2}]^{2}\right\} .\end{equation}

\end{prop}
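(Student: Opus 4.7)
The plan is to bring an arbitrary plane $V \in Gr_2(\mathbb{C}^2)$ into the proposed normal form by a two-stage normalization using the torus action. Any unit vector in $\mathbb{C}^2$ admits a polar representation $(e^{i\phi_1}\cos\psi, e^{i\phi_2}\sin\psi)$ with $\psi \in [0, \pi/2]$ and phases $\phi_1, \phi_2 \in \mathbb{R}$; applying the element $(e^{-i\phi_1}, e^{-i\phi_2}) \in U(1) \times U(1)$ brings such a vector to the real vector $(\cos\psi, \sin\psi)$. First I would pick a unit vector $u_1 \in V$ and apply the corresponding torus action, producing a $U(1) \times U(1)$-translate $V'$ of $V$ that contains the explicit real vector $(\cos\psi, \sin\psi)$.

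Next, let $u_2$ be a unit vector in $V'$ orthogonal (in the Euclidean sense on $\mathbb{R}^4 \cong \mathbb{C}^2$) to $u_1$. Writing $u_2 = (a + is_1, b + is_2)$, the orthogonality condition forces the real part to have the form $(a, b) = t(-\sin\psi, \cos\psi)$ for some $t \in \mathbb{R}$. The aim is to arrange $t = 0$, so that $u_2 = i(s_1, s_2)$ is purely imaginary; then writing the unit vector $(s_1, s_2)$ as $(\cos(\theta+\psi), \sin(\theta+\psi))$ for an appropriate angle produces the required second basis vector.

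The hard part will be showing that $t = 0$ can always be achieved. I would exploit the free choice of initial unit vector $u_1$ within $V \cap S^3$: this $S^1$-family yields a one-parameter family of $U(1) \times U(1)$-translates of $V$, each containing a distinct real vector, and along this family the coefficient $t$ depends continuously on the angular parameter. An intermediate value argument would then show that $t$ takes both signs and hence vanishes somewhere. A more computational alternative is to work directly with the $U(1) \times U(1)$-invariant Pl\"ucker coordinates $p_{12}, p_{34}$ of $V$: the proposed standard form satisfies $p_{12} + p_{34} = \cos\theta$ and $p_{12} - p_{34} = \cos(2\psi + \theta)$, which can be inverted to recover $(\theta, \psi) \in [0, \pi/2]^2$ after an appropriate choice of orientation of $V$ and suitable sign conventions, with the degenerate boundary cases $\theta = 0$ (complex lines) and $\theta = \pi/2$ (Lagrangian planes) checked separately.
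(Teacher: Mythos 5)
Your normalization set-up is fine: any unit vector of $\mathbb{C}^{2}$ can be rotated by the torus to $(\cos\psi,\sin\psi)$, and your Euclidean orthogonality computation correctly forces the real part of the second basis vector of $V'$ to be $t(-\sin\psi,\cos\psi)$. But the entire content of the proposition is concentrated in the step you defer: achieving $t=0$ is exactly the assertion that the plane contains what the paper calls a quasi-$J$-characteristic vector $(z_{0},w_{0})$ with $(\sqrt{-1}rz_{0},\sqrt{-1}sw_{0})$ also in the plane (\prettyref{lem:For-any-planeexistence}), and the paper proves this by a concrete device your sketch does not replace: writing $(z_{1},w_{1}),(z_{2},w_{2}),(\sqrt{-1}rz_{1},\sqrt{-1}sw_{1}),(\sqrt{-1}rz_{2},\sqrt{-1}sw_{2})$ as a real $4\times4$ matrix whose determinant is a quadratic form $Ar^{2}+Brs+Cs^{2}$ with $A=-C$; such a form is never definite, so it has a nontrivial real zero $(r,s)$, which forces $\dim(P\cap P_{r,s})>0$ and hence produces the desired vector. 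Your proposal replaces this with the bare claim that ``an intermediate value argument would then show that $t$ takes both signs.''

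Two things are missing before that claim can stand. First, you give no reason why $t$ should change sign as $u_{1}$ runs over $V\cap S^{3}$; the sign of $t$ is not even well defined until you fix a continuous branch of $u_{2}(\tau)$ (it flips under $u_{2}\mapsto-u_{2}$), and once you do, you need an actual symmetry or computation --- for instance relating $t(\tau)$ to $t(\tau+\frac{\pi}{2})$, where $u_{1}$ and $u_{2}$ trade places --- to force a sign change; verifying this on a plane already in normal form is circular. Second, $t(\tau)$ is continuous only where the polar decomposition of $u_{1}(\tau)$ is, and this fails whenever a component of $u_{1}(\tau)$ vanishes; these degenerate configurations (planes meeting $\mathbb{C}\times\{0\}$ or $\{0\}\times\mathbb{C}$, and in particular the planes in $\mathbb{T}^{2}$) are precisely the cases the paper must split off as the second alternative of \prettyref{lem:For-any-planeexistence}, and your family may be forced to pass through them. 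The closing remark about Pl\"ucker coordinates identifies the invariants $\cos\theta$ and $\cos(2\psi+\theta)$ of the normal form but presupposes, rather than proves, that every orbit contains a plane in that form. As written, the proposal frames the right reduction but leaves the one nontrivial step unproven.
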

Since a torus action preserves the argument differences of each component
of any two vectors in $\mathbb{C}^{2}$, so to prove Proposition \prettyref{pro:The-orbit-of},
it suffices to show the following
\begin{lem}
\label{lem:For-any-planeexistence}For any plane $P\in Gr_{2}(\mathbb{C}^{2})$,
either there exist some $(z_{0},w_{0})\in P\setminus\left\{ 0\right\} $
and $r,s\in\mathbb{R}$ such that $(\sqrt{-1}rz_{0},\sqrt{-1}sw_{0})\in P$,
in other words, \begin{equation}
P=span_{\mathbb{R}}((z_{0},w_{0}),(\sqrt{-1}rz_{0},\sqrt{-1}sw_{0})),\end{equation}
or there exists a pair of vectors $(z_{1},w_{1}),(z_{2},w_{2})\in P\setminus\left\{ 0\right\} $
such that $z_{1}w_{2}=z_{2}w_{1}=0$.\end{lem}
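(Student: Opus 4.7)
My plan is to reduce the dichotomy to the simultaneous diagonalization of two real positive definite quadratic forms in two variables. Fix any $\mathbb{R}$-basis $\{v_{1},v_{2}\}$ of $P$ and write $v_{j}=(a_{j},b_{j})\in\mathbb{C}^{2}$, so that a general element of $P$ reads $v=\xi v_{1}+\eta v_{2}=(\xi a_{1}+\eta a_{2},\,\xi b_{1}+\eta b_{2})$ for real parameters $(\xi,\eta)$. Attach to this basis the two coordinate-norm-squared quadratic forms
\begin{equation*}
N_{1}(\xi,\eta):=|\xi a_{1}+\eta a_{2}|^{2},\qquad N_{2}(\xi,\eta):=|\xi b_{1}+\eta b_{2}|^{2},
\end{equation*}
each real, symmetric, and positive semidefinite on $\mathbb{R}^{2}$; the zero set of $N_{i}$ is precisely the set of $(\xi,\eta)$ for which the $i$th coordinate of $v$ vanishes.

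I would then split on whether both $N_{i}$ are positive definite. If not, there exists $(\xi_{*},\eta_{*})\neq(0,0)$ for which $v_{*}:=(z_{*},w_{*}):=\xi_{*}v_{1}+\eta_{*}v_{2}\in P\setminus\{0\}$ satisfies $z_{*}=0$ or $w_{*}=0$; setting $(z_{1},w_{1})=(z_{2},w_{2})=v_{*}$ makes $z_{1}w_{2}=z_{2}w_{1}=0$, placing $P$ in the second alternative of the lemma. From this point on assume both $N_{1}$ and $N_{2}$ are positive definite on $\mathbb{R}^{2}$.

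The crucial step is then to invoke the classical theorem that two real positive definite quadratic forms in two variables admit a common real diagonalizing basis. This produces a new $\mathbb{R}$-basis $\{\tilde{v}_{1},\tilde{v}_{2}\}$ of $P$, with $\tilde{v}_{j}=(\tilde{a}_{j},\tilde{b}_{j})$, together with reparametrized coordinates $(\tilde{\xi},\tilde{\eta})$ in which
\begin{equation*}
N_{1}=\tilde{\xi}^{2}+\tilde{\eta}^{2},\qquad N_{2}=\lambda\tilde{\xi}^{2}+\mu\tilde{\eta}^{2},\qquad\lambda,\mu>0.
\end{equation*}
Expanding the first identity forces $|\tilde{a}_{1}|=|\tilde{a}_{2}|=1$ and $\operatorname{Re}(\tilde{a}_{1}\overline{\tilde{a}_{2}})=0$, hence $\tilde{a}_{2}=\pm\sqrt{-1}\,\tilde{a}_{1}$; the second identity likewise yields $\operatorname{Re}(\tilde{b}_{1}\overline{\tilde{b}_{2}})=0$, hence $\tilde{b}_{2}=\sqrt{-1}\,\rho\,\tilde{b}_{1}$ for some real $\rho$. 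Setting $(z_{0},w_{0}):=(\tilde{a}_{1},\tilde{b}_{1})$, $r\in\{\pm 1\}$ and $s:=\rho$, the second basis vector becomes $\tilde{v}_{2}=(\sqrt{-1}\,r z_{0},\sqrt{-1}\,s w_{0})$, so $P=\operatorname{span}_{\mathbb{R}}\{(z_{0},w_{0}),(\sqrt{-1}\,r z_{0},\sqrt{-1}\,s w_{0})\}$, which is the first alternative.

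The only nontrivial step is the invocation of simultaneous diagonalization; everything else is a bookkeeping translation between the ``$N_{1}$-orthonormality and $N_{2}$-orthogonality'' of $\{\tilde{v}_{1},\tilde{v}_{2}\}$ in the new basis and the algebraic statement that $\tilde{a}_{2}$ and $\tilde{b}_{2}$ are purely imaginary real scalar multiples of $\tilde{a}_{1}$ and $\tilde{b}_{1}$, respectively.
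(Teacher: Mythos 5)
Your proof is correct, and it follows a genuinely different route from the paper's. The paper isolates the planes associated with $\mathbb{T}^{2}$ and, for all other $P$, proves an ``intersection lemma'': stacking the real coordinates of spanning vectors of $P$ and $P_{r,s}$ into a $4\times 4$ matrix, the determinant is a quadratic form $Ar^{2}+Brs+Cs^{2}$ in $(r,s)$ with $A=-C$, hence indefinite or identically zero, hence vanishing at some $(r,s)\neq(0,0)$; this forces $P+P_{r,s}\subsetneq\mathbb{C}^{2}$, so $\dim(P\cap P_{r,s})>0$, which supplies the quasi-$J$-characteristic vector. You instead attach to $P$ the two coordinate forms $N_{1},N_{2}$ and invoke simultaneous diagonalization of a pair of positive definite quadratic forms. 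The two case splits do not coincide (for instance $\{0\}\times\mathbb{C}$ lies outside $\mathbb{T}^{2}$ but falls into your degenerate case), yet both are legitimate since the lemma is a disjunction; and your reading of the second alternative --- allowing the ``pair'' to be a vector with a vanishing coordinate taken twice --- is in fact the only reading under which the lemma is true (for $P=\mathbb{R}^{2}\subset\mathbb{C}^{2}$ the first alternative fails and no linearly independent pair satisfies $z_{1}w_{2}=z_{2}w_{1}=0$). Your route also buys something the paper leaves implicit: the vector $(\sqrt{-1}rz_{0},\sqrt{-1}sw_{0})$ arrives as the second element of a basis of $P$, so its $\mathbb{R}$-independence from $(z_{0},w_{0})$ is automatic, whereas the intersection lemma only furnishes a nonzero vector of $P\cap P_{r,s}$ and the claim that the resulting two vectors actually span $P$ needs an extra word. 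What the paper's computation buys in exchange is the explicit control of $(r,s)$ that feeds directly into the $(\theta,\psi)$ orbit parametrization of the Grassmannian in the surrounding proposition.
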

\begin{rem}
We call the vector $(z_{0},w_{0})$ a quasi-$J$-characteristic vector
of the plane $P$ . In particular, every non-zero vector in a complex
line $L$ in $\mathbb{C}^{2}$ is a quasi-$J$-characteristic vector
of $L$.
\end{rem}
Let $\mathbf{\mathbf{\mathbb{T}^{2}}}:=\left\{ span_{\mathbb{R}}((z,0),(0,w)):z,w\in U(1)\right\} \cong U(1)\times U(1)$,
then in fact the latter part of the conclusion in \prettyref{lem:For-any-planeexistence}
is derived from the planes in $\mathbf{\mathbf{\mathbb{T}^{2}}}$.
For planes which are not in $\mathbf{\mathbf{\mathbb{T}^{2}}}$, we
need to show that they generate the former part of the conclusion
in \prettyref{lem:For-any-planeexistence}, which is geometrically
equivalent to
\begin{lem}[Intersection lemma]
For any $P\in Gr_{2}(\mathbb{C}^{2})\setminus\mathbf{\mathbf{\mathbb{T}^{2}}}$,
there exist $r,s\in\mathbb{R}$ such that $dim(P_{r,s}\cap P)>0$
where $P_{r,s}:=\left\{ (\sqrt{-1}rz,\sqrt{-1}sw):(z,w)\in P\right\} $.\end{lem}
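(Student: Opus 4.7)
The plan is to fix a real basis $v_{1},v_{2}$ of $P$, write $v_{j}=(z_{j},w_{j})\in\mathbb{C}^{2}$, and recast the condition $\dim(P_{r,s}\cap P)>0$ as the vanishing of a single $4\times 4$ real determinant. With $J_{r,s}(z,w):=(\sqrt{-1}rz,\sqrt{-1}sw)$, a non-trivial intersection is equivalent to the real linear dependence of the four vectors $v_{1},v_{2},J_{r,s}v_{1},J_{r,s}v_{2}$ in $\mathbb{R}^{4}$, i.e., to
\begin{equation*}
D(r,s)\,:=\,\det\bigl[\,v_{1}\,|\,v_{2}\,|\,J_{r,s}v_{1}\,|\,J_{r,s}v_{2}\,\bigr]\,=\,0;
\end{equation*}
moreover, when $rs\neq 0$ the invertibility of $J_{r,s}$ ensures that any non-trivial kernel vector of this matrix produces a genuine non-zero element of $P\cap P_{r,s}$.

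A Laplace expansion along the last two columns should yield the homogeneous quadratic
\begin{equation*}
D(r,s)\,=\,\mu_{z}\mu_{w}(r^{2}+s^{2})\,-\,S\cdot rs,
\end{equation*}
where $\mu_{z}:=\operatorname{Im}(\bar z_{1}z_{2})$, $\mu_{w}:=\operatorname{Im}(\bar w_{1}w_{2})$, and $S:=|z_{1}|^{2}|w_{2}|^{2}+|z_{2}|^{2}|w_{1}|^{2}-2\operatorname{Re}(z_{1}\bar z_{2})\operatorname{Re}(\bar w_{1}w_{2})$ collects the four ``mixed'' squared $2\times 2$ minors. The crucial algebraic point is the inequality $S\geq 2|\mu_{z}\mu_{w}|$, which after polar substitution reduces, via $\cos\alpha\cos\beta+|\sin\alpha\sin\beta|=\max\{\cos(\alpha-\beta),\cos(\alpha+\beta)\}\leq 1$, to the elementary $p^{2}+q^{2}\geq 2pq$ applied to $p=|z_{1}||w_{2}|$ and $q=|z_{2}||w_{1}|$.

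In the generic case $\mu_{z}\mu_{w}\neq 0$, the quadratic $\mu_{z}\mu_{w}t^{2}-St+\mu_{z}\mu_{w}=0$ in $t=r/s$ thus has non-negative discriminant and hence real roots of product $1$; the resulting $(r,s)$ satisfies $rs\neq 0$ and delivers the intersection by the remark above. The degenerate cases $\mu_{z}\mu_{w}=0$ are where the hypothesis $P\notin\mathbb{T}^{2}$ is essential. Assuming $\mu_{z}=0$, so $z_{1},z_{2}$ are $\mathbb{R}$-proportional, a basis change reduces $P$ either to the coordinate complex line $\{0\}\times\mathbb{C}$ (which $J_{0,1}$ preserves) or to $\mathbb{R}(z_{1},w_{1})+\mathbb{R}(0,w')$ with $z_{1},w'$ non-zero; the further $\mathbb{R}$-dependence of $\{w_{1},w'\}$ (including $w_{1}=0$) would force $P\in\mathbb{T}^{2}$ and is excluded, so $w_{1},w'$ are $\mathbb{R}$-independent, giving $P_{0,1}=\{0\}\times\mathbb{C}$ and $\mathbb{R}(0,w')\subseteq P\cap P_{0,1}$. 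The case $\mu_{w}=0$ is symmetric.

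The main obstacle is the algebraic inequality $S\geq 2|\mu_{z}\mu_{w}|$; although elementary, it is the linchpin that guarantees real roots of the quadratic. A secondary subtlety is the case analysis in the degenerate regime, where one must verify that the hypothesis $P\notin\mathbb{T}^{2}$ is precisely the condition forcing a non-trivial intersection on a coordinate $(r,s)$-axis.
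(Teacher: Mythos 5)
Your proposal follows the same skeleton as the paper --- encode the intersection condition as the vanishing of the $4\times 4$ real determinant $D(r,s)$, note that it is a homogeneous quadratic in $(r,s)$, and produce a real zero --- but the decisive details differ, and yours are the correct ones. Expanding along the last two rows in Pl\"ucker coordinates $p_{ij}$ of $P$ gives
\begin{equation*}
D(r,s)=p_{12}p_{34}\,(r^{2}+s^{2})-\bigl(p_{13}^{2}+p_{14}^{2}+p_{23}^{2}+p_{24}^{2}\bigr)rs,
\end{equation*}
which is exactly your $\mu_{z}\mu_{w}(r^{2}+s^{2})-S\,rs$: the $r^{2}$ and $s^{2}$ coefficients are \emph{equal}, whereas the paper asserts $A=-C$ and thereby gets real roots for free from a discriminant of the form $B^{2}+4A^{2}$. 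With the correct sign the discriminant is $S^{2}-4(\mu_{z}\mu_{w})^{2}$, so your inequality $S\geq 2|\mu_{z}\mu_{w}|$ is genuinely needed; it is true (your trigonometric reduction to $p^{2}+q^{2}\geq 2pq$ works, and it is equivalent to combining the Pl\"ucker relation $p_{12}p_{34}=p_{13}p_{24}-p_{14}p_{23}$ with AM--GM). Your argument also supplies two points the paper skips: in the generic case the two roots have product $1$, so $rs\neq 0$, $J_{r,s}$ is invertible, and $\dim P_{r,s}=2$, which is what actually converts $D(r,s)=0$ into a nonzero vector of $P\cap P_{r,s}$; and the degenerate case $\mu_{z}\mu_{w}=0$ is where the hypothesis $P\notin\mathbb{T}^{2}$ finally enters. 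The paper's proof never invokes that hypothesis, and as written it would apply equally to $P=\mathbb{R}^{2}=\operatorname{span}_{\mathbb{R}}((1,0),(0,1))\in\mathbb{T}^{2}$, where $D(r,s)=-rs$ vanishes only on the coordinate axes and every $P_{r,s}$ meets $P$ trivially. So your route is not just an alternative: it is the repaired and completed version of the paper's argument, at the cost of one extra (elementary but essential) inequality and a short case analysis.
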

\begin{proof}
Let $P=span_{\mathbb{R}}((z_{1},w_{1}),(z_{2},w_{2}))\in Gr_{2}(\mathbb{C}^{2})\setminus\mathbf{\mathbf{\mathbb{T}^{2}}}$,
and $z_{i}=x_{i}+\sqrt{-1}y_{i}$ and $w_{i}=u_{i}+\sqrt{-1}v_{i}$
for $i=1,2$. Using the determinants of block matrices by partitioning
a matrix, one can obtain that \begin{equation}
det\left(\begin{array}{cccc}
x_{1} & y_{1} & u_{1} & v_{1}\\
x_{2} & y_{2} & u_{2} & v_{2}\\
-ry_{1} & rx_{1} & -sv_{1} & su_{1}\\
-ry_{2} & rx_{2} & -sv_{2} & su_{2}\end{array}\right)=Ar^{2}+Brs+Cs^{2}\label{eq:det}\end{equation}
for some $A,B,C\in\mathbb{R}$ with $A=-C=det(M_{11})det(M_{22})$
where $M_{11}=\left(\begin{array}{cc}
x_{1} & y_{1}\\
x_{2} & y_{2}\end{array}\right)$ and $M_{22}=\left(\begin{array}{cc}
-v_{1} & u_{1}\\
-v_{2} & u_{2}\end{array}\right)$. Therefore, there exist $r,s\in\mathbb{R}$ and either $r$ or $s$
is not $0$, such that the determinant \prettyref{eq:det} is identical
to $0$. It follows that \begin{equation}
P\oplus P_{r,s}=span_{\mathbb{R}}((z_{1},w_{1}),(z_{2},w_{2}),(\sqrt{-1}rz_{1},\sqrt{-1}sw_{1}),(\sqrt{-1}rz_{2},\sqrt{-1}sw_{2}))\subsetneq\mathbb{C}^{2},\end{equation}
and then we have $dim(P_{r,s}\cap P)>0$ by the inclusion-exclusion
principle.
\end{proof}
Thus we have shown \prettyref{lem:For-any-planeexistence}. Furthermore,
one can choose appropriate $(\theta,\psi)\in[0,\frac{\pi}{2}]^{2}$
such that $span_{\mathbb{R}}((\cos\psi,\sin\psi),(\sqrt{-1}\cos(\theta+\psi),\sqrt{-1}\sin(\theta+\psi)))$
and $P$ are on the same orbit of $Gr_{2}(\mathbb{C}^{2})$ acted
by torus actions. So we have finished the proof for Proposition \prettyref{pro:The-orbit-of}.

\section{A Surjectivity Theorem}

We want to extend the surjectivity theorem on the cosine transform
to functions which are not even differentiable away from zero by making
applications of Fredholm's theorem on integral equations.
\begin{thm}[Surjectivity theorem ]
For any $U(1)\times U(1)$-invariant function $F:\mathbb{C}^{2}\rightarrow\mathbb{R}$
with homogeneity of magnitude, there is some function $f$ on $S^{n}$,
such that \begin{equation}
F(\cdot)=\int_{S^{3}}|\langle\xi,\cdot\rangle|f(\xi)d\xi\end{equation}
\end{thm}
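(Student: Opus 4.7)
The plan is to leverage the $U(1)\times U(1)$-invariance of $F$ to collapse the surjectivity problem to a one-variable Fredholm integral equation in the latitude $\eta\in[0,\pi/2]$, to which Theorem \ref{thm:frethm} can then be applied. First, parametrize $S^{3}\subset\mathbb{C}^{2}$ by $\xi=(e^{i\xi_{1}}\cos\eta,\,e^{i\xi_{2}}\sin\eta)$, use the homogeneity of $F$ to reduce to $(\bar z,\bar w)\in S^{3}$, and --- guided by the invariance property of Crofton measures established in the previous section --- look for a $U(1)\times U(1)$-invariant density on $S^{3}$, necessarily of the form $f(\eta)$. Integrating out the two $\xi_{i}$ variables yields a symmetric kernel $K(\bar\eta,\eta)\in L^{2}([0,\pi/2]^{2})$ and recasts the problem as the first-kind integral equation
\[
F(\bar\eta)=\int_{0}^{\pi/2}K(\bar\eta,\eta)\,f(\eta)\,d\eta.
\]

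Second, I would convert this first-kind equation into one of second kind so that Theorem \ref{thm:frethm} applies as stated. The natural tool is to differentiate twice in $\bar\eta$: the absolute value $|\cos\xi_{1}\cos\eta+\cos\xi_{2}\sin\eta|$ is piecewise smooth, and its distributional second derivative produces a Dirac mass along its zero set which, after the $\xi_{1},\xi_{2}$ integration, peels off a nonzero multiple of the identity plus a residual kernel of lower order. The resulting equation has the form $\lambda f-\widetilde{T}f=\widetilde{F}$, a Fredholm equation of the second kind in the sense of Theorem \ref{thm:frethm}. With this reduction, the injectivity of the cosine transform on $U(1)\times U(1)$-invariant functions (Proposition 3.4.12 in \cite{G}, invoked earlier in the paper) forces the homogeneous adjoint equation to admit only the trivial solution; hence the compatibility condition of Theorem \ref{thm:frethm} is vacuous and a solution $f(\eta)$ exists. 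Lifting $f(\eta)$ to an invariant function on $S^{3}$ and extending by homogeneity then yields the desired Crofton representation on all of $\mathbb{C}^{2}$.

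The main obstacle will be the first-to-second-kind conversion. The kernel produced by integrating out the absolute value is only H\"older continuous in its arguments, so the differentiation step must be carried out carefully, paying particular attention to boundary contributions at $\eta=0$ and $\eta=\pi/2$ where $\cos\eta$ or $\sin\eta$ vanishes, and to the precise function space in which the Fredholm alternative is being run. A secondary, softer issue is reconciling the regularity of the solution $f$ delivered by the $L^{2}$ Fredholm theory with the functional --- rather than distributional --- statement of the theorem; as the paper's title suggests, a distributional interpretation may in fact be the natural framework, in which case this final step may require upgrading Theorem \ref{thm:frethm} to distributional data or first running the argument on a dense class of admissible $F$ and then passing to the limit.
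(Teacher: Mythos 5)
Your first step---integrating out $\xi_{1},\xi_{2}$ to get a one-variable kernel $K(\bar{\eta},\eta)$ and recasting the problem as the first-kind equation $\int K(\bar{\eta},\eta)f(\eta)\,d\eta=F(\bar{\eta})$---is exactly the reduction the paper makes. You are also right to flag that this is an equation of the \emph{first} kind, whereas \prettyref{thm:frethm} is stated for $\lambda\phi-\int K\phi=f$; the paper itself simply writes down the first-kind equation and invokes \prettyref{thm:frethm} without addressing either the $\lambda=0$ issue or the orthogonality condition, so your proposal is more candid than the paper about where the real work lies.

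The gap is that your proposed repair---differentiating twice in $\bar{\eta}$ to ``peel off a nonzero multiple of the identity''---does not produce an identity term. Write the integrand as $|g(\bar{\eta})|$ with $g(\bar{\eta})=A\cos\bar{\eta}+B\sin\bar{\eta}$, $A=\cos(\xi_{1}-\bar{\xi}_{1})\cos\eta$, $B=\cos(\xi_{2}-\bar{\xi}_{2})\sin\eta$. Since $g''=-g$, one has $\partial_{\bar{\eta}}^{2}|g|=-|g|+2\sum_{g(\bar{\eta}_{0})=0}|g'(\bar{\eta}_{0})|\,\delta(\bar{\eta}-\bar{\eta}_{0})$, and the Dirac masses sit at locations $\bar{\eta}_{0}=\bar{\eta}_{0}(\xi_{1},\xi_{2},\eta)$; for fixed $(\bar{\eta},\eta)$ the condition $\bar{\eta}_{0}=\bar{\eta}$ cuts out a codimension-one set in the $(\xi_{1},\xi_{2})$-torus. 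Integrating over $\xi_{1},\xi_{2}$ therefore smears these masses into another bounded kernel $\tilde{K}(\bar{\eta},\eta)$ rather than a multiple of $\delta(\bar{\eta}-\eta)$, so you obtain $(\partial_{\bar{\eta}}^{2}+1)\int Kf=\int\tilde{K}f$, which is still of the first kind. (Contrast the circle case $\int_{0}^{2\pi}|\cos(\bar{\eta}-\eta)|f(\eta)\,d\eta$, where there is no extra integration and the delta survives; that is the identity you are implicitly relying on.) With no second-kind structure, the subsequent appeal to the Fredholm alternative collapses, and in any case the compatibility condition would concern the null space of $\lambda I-\tilde{T}$, not of the cosine transform itself, so injectivity of the latter would not make it vacuous. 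Completing the argument requires a genuinely different mechanism for characterizing the range of the first-kind operator, e.g., diagonalizing $K$ in an explicit orthogonal eigenbasis (as the paper effectively does later with Legendre products) and checking that the eigenvalues relevant to invariant $F$ are nonzero with controlled decay.
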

\begin{proof}
Let \begin{equation}
K(\eta,\bar{\eta}):=\int_{0}^{2\pi}\int_{0}^{2\pi}|\cos(\xi_{1}-\bar{\xi}_{1})\cos\eta\cos\bar{\eta}+\cos(\xi_{2}-\bar{\xi}_{2})\sin\eta\sin\bar{\eta}|d\xi_{1}d\xi_{2}\end{equation}
because the double integral is independent of $\bar{\xi}_{1}$ and
$\bar{\xi}_{2}$. Considering the integral equation \begin{equation}
\int_{0}^{2\pi}K(\eta,\bar{\eta})f(\eta)\, d\eta=F(\eta),\label{eq:torusexist}\end{equation}
and applying \prettyref{thm:frethm} to it, we know that there exists
some $f(\eta)$ satisfying integral equation \prettyref{eq:torusexist}.
\end{proof}
In the theory of convex bodies, \cite{S}, the support function of
the unit ball in a Minkowski space is actually the metric function,
and the ball is called a generalized zonoid if its support function
is in the range of cosine transform on the functions on $S^{3}$.
Hence we have the following
\begin{cor}
The unit ball of any complex Minkowski plane $(\mathbb{C}^{2},F)$
with $U(1)\times U(1)$-invariant complex Minkowski metric $F$ is
a generalized zonoid.\end{cor}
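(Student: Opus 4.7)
The plan is to deduce this corollary as a direct consequence of the Surjectivity theorem together with the standard identification, made in \cite{S}, between the Minkowski metric of a normed space and the support function of its unit ball. So first I would invoke that identification: because $F$ is a norm on $\mathbb{C}^{2}$, the unit ball $B_{F}:=\{v\in\mathbb{C}^{2}:F(v)\le 1\}$ is a symmetric convex body whose support function $h_{B_{F}}$ coincides with $F$ itself. In particular $h_{B_{F}}$ inherits $U(1)\times U(1)$-invariance and positive $1$-homogeneity from $F$, so it satisfies the hypotheses of the Surjectivity theorem.

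Applying that theorem then produces a function $f$ on $S^{3}$ with
\begin{equation}
h_{B_{F}}(v)=F(v)=\int_{S^{3}}|\langle \xi,v\rangle|\,f(\xi)\,d\xi \qquad\text{for all }v\in\mathbb{C}^{2}.
\end{equation}
By the very definition recalled in the paragraph preceding the corollary, this exhibits $h_{B_{F}}$ as lying in the range of the cosine transform on $S^{3}$, which is precisely what it means for $B_{F}$ to be a generalized zonoid.

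The only point that might deserve care is the bookkeeping between the real inner product on $\mathbb{C}^{2}\simeq\mathbb{R}^{4}$ used in the cosine transform and the complex structure implicit in the torus action; but this is exactly the setup used to build the kernel $K(\eta,\bar\eta)$ in the proof of the Surjectivity theorem, so the verification is immediate. I therefore do not foresee any substantive obstacle: the corollary is essentially the Minkowski-norm reformulation of the Surjectivity theorem, transported through the norm/support-function duality.
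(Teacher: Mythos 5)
Your argument is exactly the paper's: the corollary is stated as an immediate consequence of the Surjectivity theorem via the identification (quoted from \cite{S} in the preceding paragraph) of the Minkowski metric with the support function of the unit ball, and the definition of a generalized zonoid as a body whose support function lies in the range of the cosine transform on $S^{3}$. Your write-up just makes explicit the one-line deduction the paper leaves implicit, so there is nothing to add.
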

\begin{rem}
To apply the integral equation theory, one does not need any smoothness
condition on the metric. However, the approach of integral equation
theory can not be generalized to Minkowski metric on $\mathbb{R}^{n}$
for any $n$, in which the unit ball could be not a generalized zonoid,
for example, the octahedron, as pointed out by Joe Fu, in $\mathbb{R}^{3}$
with $l^{1}$ metric.
\end{rem}

\section{On a Complex Minkowski Metric To Be Hermitian on $\mathbb{C}^{n}$ }

From the perspective of complex integral geometry, the following theorem
on a characterization of complex Minkowski metric $\mathbb{C}^{n}$
to be Hermitian is established
\begin{thm}[Characterization of Hermitian metric]
\label{thm:Suppose-that-thm}Suppose that $(\mathbb{C}^{n},F)$ is
a complex Minkowski space. Then the Holmes-Thompson valuation, that
is extended from the Holmes-Thompson area on $(\mathbb{C}^{n},F)$,
restricted on $\mathbb{C}\mathbb{P}^{n-1}$ is in the range of the
cosine transform on $C(\mathbb{C}\mathbb{P}^{n-1})$ if and only if
the complex Minkowski metric $F$ is Hermitian.\end{thm}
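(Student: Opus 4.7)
The plan is to prove the two implications separately. For the sufficiency direction, when $F$ is Hermitian, $U(n)$ acts by isometries on $(\mathbb{C}^n, F)$, so both the Holmes-Thompson area and its extension to a valuation are $U(n)$-invariant. The restriction to $\mathbb{CP}^{n-1}$ is therefore a $U(n)$-invariant continuous function on $\mathbb{CP}^{n-1}$, and transitivity of the $U(n)$-action forces it to be constant. Since the cosine-transform kernel $(L, L') \mapsto |\cos\angle(L, L')|$ is $U(n)$-invariant, the cosine transform sends constants to a nonzero constant, so every constant function on $\mathbb{CP}^{n-1}$ lies in the range.

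For the necessity direction, suppose the restriction $\Phi$ of the Holmes-Thompson valuation to $\mathbb{CP}^{n-1}$ equals the cosine transform of some $g \in C(\mathbb{CP}^{n-1})$. First I would express $\Phi(L)$ geometrically: on a complex line $L$ viewed as a real $2$-plane, the Holmes-Thompson density is a constant multiple of the Euclidean $2$-area of the polar of the unit disk $\{v \in L : F(v) \leq 1\}$, so $\Phi(L)$ depends only on $F|_L$. Next I would decompose both $\Phi$ and $g$ into the $U(n)$-irreducible components of $C(\mathbb{CP}^{n-1})$ (the bi-degree $(p,p)$ harmonic components coming from $\mathbb{CP}^{n-1} \simeq U(n)/U(n-1) \times U(1)$). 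Since the cosine transform is $U(n)$-equivariant, Schur's lemma makes it act as a scalar on each component, and the hypothesis forces the components of $\Phi$ in the kernel of these scalars to vanish.

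The final geometric step is to translate this spectral vanishing back to a condition on $F$, namely, that the unit disk $\{v \in L : F(v) \leq 1\}$ is a Euclidean disk on every complex line $L$. Once this is in hand, a polarization argument recovers a Hermitian inner product on $\mathbb{C}^n$ with the same unit ball as $F$, proving that $F$ is Hermitian.

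The principal obstacle lies in the spectral analysis: one needs a precise description of the cosine-transform eigenvalues on each $U(n)$-irreducible component of $C(\mathbb{CP}^{n-1})$, a Funk--Hecke-type computation on $\mathbb{CP}^{n-1}$ analogous to the Legendre expansion that the preceding section carries out for $Gr_2(\mathbb{C}^2)$. Extending the low-dimensional prototype to general $n$ requires the full representation theory of $U(n)$ on $L^2(\mathbb{CP}^{n-1})$; moreover, the geometric translation step in the previous paragraph relies on the resulting spectral information being strong enough to force roundness of $F|_L$, which is a nontrivial lemma in itself.
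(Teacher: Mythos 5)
Your proposal has genuine gaps in both directions, and it misses the one observation that makes the paper's proof short: on $\mathbb{CP}^{n-1}$ the cosine-transform kernel is \emph{degenerate}. For two complex lines $L=\mathrm{span}_{\mathbb C}(v)$ and $\tilde L=\mathrm{span}_{\mathbb C}(\tilde e)$ the projection factor is $|\langle v,\tilde e\rangle_{\mathbb C}|^{2}=\sum_{i,j}z_{i}\bar z_{j}\,\bar{\tilde z}_{i}\tilde z_{j}$, a finite sum of separated products. Consequently the range of the transform on $C(\mathbb{CP}^{n-1})$ is the finite-dimensional space of Hermitian quadratic forms $\sum_{i,j}h_{i\bar j}z_{i}\bar z_{j}$ (with $h_{i\bar j}=\bar h_{j\bar i}$ because the density is real), and since the restriction of the Holmes--Thompson valuation to $\mathbb{CP}^{n-1}$ is $L\mapsto F^{2}(v)$, both implications follow at once: membership in the range forces $F^{2}$ to be a Hermitian form, and conversely any Hermitian form is hit because the functions $\bar{\tilde z}_{i}\tilde z_{j}$ are linearly independent in $L^{2}(S^{2n-1}/U(1))$. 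No Funk--Hecke computation, Schur's lemma, or $U(n)$-harmonic decomposition is needed; the ``principal obstacle'' you identify does not arise.

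The two concrete errors are these. First, in your sufficiency argument you assert that a Hermitian $F$ is $U(n)$-invariant and hence that the restriction to $\mathbb{CP}^{n-1}$ is constant. That holds only for the \emph{standard} Hermitian metric: a general Hermitian $F^{2}(v)=\sum h_{i\bar j}z_{i}\bar z_{j}$ is invariant under the unitary group of $h$, which is a conjugate of $U(n)$ and does not preserve the angles entering the cosine transform, so the restriction is a genuinely non-constant Hermitian form and your ``constants map to constants'' argument proves only a special case. Second, your proposed geometric endpoint for necessity --- that the unit disk $\{v\in L: F(v)\le 1\}$ is a Euclidean disk on every complex line $L$ --- is automatically true for \emph{every} complex Minkowski metric, since $F(e^{i\theta}v)=F(v)$ makes $F|_{L}$ radially symmetric; it therefore carries no information and cannot feed a polarization argument. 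What must actually be extracted is that $F^{2}$ is a quadratic form on all of $\mathbb{C}^{n}$ jointly in $v$ and $\bar v$, which is exactly what the degenerate-kernel computation delivers and what per-line roundness does not.
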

\begin{proof}
For any fixed complex line $L\in\mathbb{C}\mathbb{P}^{n-1}$, let
$U$ be the rectangle spanned by $v:=(z_{1},\cdots,z_{n})\in L$ and
$\sqrt{-1}v\in L$. Since $F$ is $U(1)$-invariant on $L$, then
the Holmes-Thompson area of $U$ is\begin{equation}
HT^{2}(U)=F^{2}(v).\end{equation}

On the other hand, for any complex line $\tilde{L}:=span_{\mathbb{C}}(\tilde{e})\in\mathbb{C}\mathbb{P}^{n-1}$
where $\tilde{e}:=(\tilde{z}_{1},\cdots,\tilde{z}_{n})\in\mathbb{C}^{n}$
with $|\tilde{e}|=(\sum_{i=1}^{n}|\tilde{z}_{i}|^{2})^{1/2}=1$, we
know that \begin{equation}
\begin{array}{lcl}
area(\pi_{\tilde{L}}(U)) & = & \mid det\left(\begin{array}{cc}
Re(\langle v,\tilde{e}\rangle_{\mathbb{C}}) & Im(\langle v,\tilde{e}\rangle_{\mathbb{C}})\\
-Im(\langle v,\tilde{e}\rangle_{\mathbb{C}}) & Re(\langle v,\tilde{e}\rangle_{\mathbb{C}})\end{array}\right)\mid\\
 & = & |\langle v,\tilde{e}\rangle_{\mathbb{C}}|^{2},\end{array}\label{eq:loong}\end{equation}
in which $\langle v,\tilde{e}\rangle_{\mathbb{C}}$ is the complex
inner product, and $area(\pi_{\tilde{L}}(U))$ is independent of the
choice of unit vector $\tilde{e}$ in $\tilde{L}$.

If $HT^{2}$ is in the range of cosine transform on $C(\mathbb{C}\mathbb{P}^{n-1})$,
then there exists some function $f:\mathbb{C}\mathbb{P}^{n-1}\rightarrow\mathbb{R}$,
such that 

\begin{equation}
\int_{\mathbb{C}\mathbb{P}^{n-1}}area(\pi_{\tilde{L}}(U))f(\tilde{L})d\tilde{L}=F^{2}(v).\label{eq:2}\end{equation}
Since $\mathbb{C}\mathbb{P}^{n-1}=S^{2n-1}/U(1)$, then by \prettyref{eq:loong}
we have\begin{equation}
\begin{array}{lll}
\int_{\mathbb{C}\mathbb{P}^{n-1}}area(\pi_{\tilde{L}}(U))f(\tilde{L})d\tilde{L} & = & \int_{S^{2n-1}/U(1)}|\langle v,\tilde{e}\rangle_{\mathbb{C}}|^{2}f(\tilde{e})d\tilde{e}\end{array}.\end{equation}
Written in terms of components of $\tilde{e}$ and $v$, \begin{equation}
\begin{array}{lll}
\int_{\mathbb{C}\mathbb{P}^{n-1}}area(\pi_{\tilde{L}}(U))f(\tilde{L})d\tilde{L} & = & \int_{S^{2n-1}/U(1)}\sum_{i=1}^{n}|z_{i}\bar{\tilde{z}}_{i}|^{2}f(\tilde{e})d\tilde{e}\\
 & = & \int_{S^{2n-1}/U(1)}\sum_{i,j=1}^{n}z_{i}\bar{\tilde{z}}_{i}\bar{z}_{j}\tilde{z}_{j}f(\tilde{e})d\tilde{e}\\
 & = & \sum_{i,j=1}^{n}z_{i}\bar{z}_{j}\int_{S^{2n-1}/U(1)}\bar{\tilde{z}}_{i}\tilde{z}_{j}f(\tilde{e})d\tilde{e}.\end{array}\end{equation}
Let $h_{i\bar{j}}:=\int_{S^{2n-1}/U(1)}\bar{\tilde{z}}_{i}\tilde{z}_{j}f(\tilde{e})d\tilde{e}$,
then $h_{i\bar{j}}=\bar{h}_{j\bar{i}}$ since $f(\tilde{e})\in\mathbb{R}$.
Thus it follows from \prettyref{eq:2} that $F^{2}(v)=\sum_{i,j=1}^{n}h_{i\bar{j}}z_{i}\bar{z}_{j}$
is Hermitian. 

Conversely, one can see, by the fact that $\left\{ \bar{\tilde{z}}_{i}\tilde{z}_{j}:i,j=1,\cdots n\right\} $
are linearly independent in the Hilbert space $L^{2}(S^{2n-1}/U(1))$
and \begin{equation}
F|_{\mathbb{C}\mathbb{P}^{n-1}}\in C(\mathbb{C}\mathbb{P}^{n-1})\subset L^{2}(S^{2n-1}/U(1)),\end{equation}
that if $F$ is Hermitian then the Holmes-Thompson valuation restricted
on $\mathbb{C}\mathbb{P}^{n-1}$ is in the range of the cosine transform
on $C(\mathbb{C}\mathbb{P}^{n-1})$ .\end{proof}
\begin{rem}
Form the proof of \prettyref{thm:Suppose-that-thm}, we know that
the range of the cosine transform on $C(\mathbb{C}\mathbb{P}^{n-1})$
is finite dimensional.
\end{rem}

\section{Revolutions of Spheres and Torus Actions}

We have shown the following
\begin{prop}
\label{pro:The-orbit-of-1}The orbits of $Gr_{2}(\mathbb{C}^{2})$
acted by torus actions can be parametrized as \begin{equation}
\left\{ span_{\mathbb{R}}((\cos\psi,\sin\psi),(\sqrt{-1}\cos(\theta+\psi),\sqrt{-1}\sin(\theta+\psi))):(\theta,\psi)\in[0,\frac{\pi}{2}]^{2}\right\} .\label{eq:orbit}\end{equation}

\end{prop}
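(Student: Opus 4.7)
The plan is to deduce \prettyref{pro:The-orbit-of-1} directly from the dichotomy furnished by \prettyref{lem:For-any-planeexistence} and the Intersection Lemma of Section~2, and then to pick a canonical representative inside each torus orbit by composing an element of $U(1)\times U(1)$ with a real rescaling of the two generators. I would not attempt to re-derive the dichotomy; I would simply invoke it.

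First, fix $P\in Gr_{2}(\mathbb{C}^{2})$ and split into two cases. In the torus case $P\in\mathbb{T}^{2}$, write $P=\mathrm{span}_{\mathbb{R}}((z,0),(0,w))$ and apply $(e^{-i\arg z},e^{i(\pi/2-\arg w)})$ to reduce $P$ to $\mathrm{span}_{\mathbb{R}}((1,0),(0,\sqrt{-1}))$, which corresponds to $(\theta,\psi)=(\pi/2,0)$ in~\eqref{eq:orbit}. In the generic case, \prettyref{lem:For-any-planeexistence} supplies a quasi-$J$-characteristic vector $(z_{0},w_{0})$ and real scalars $r,s$ (not both zero) so that $P=\mathrm{span}_{\mathbb{R}}\bigl((z_{0},w_{0}),(\sqrt{-1}r z_{0},\sqrt{-1}s w_{0})\bigr)$.

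Next, I would normalize the first generator by applying the torus element $(e^{-i\arg z_{0}},e^{-i\arg w_{0}})$, which carries $(z_{0},w_{0})$ to $(|z_{0}|,|w_{0}|)$ and the second generator to $\sqrt{-1}(r|z_{0}|,s|w_{0}|)$. A positive real rescaling of both generators by $1/\sqrt{|z_{0}|^{2}+|w_{0}|^{2}}$ (a real-span operation) then writes the first generator uniquely as $(\cos\psi,\sin\psi)$ with $\psi\in[0,\pi/2]$. The second generator is now $\sqrt{-1}(r\cos\psi,s\sin\psi)$, and since only the real span matters I may multiply it by any nonzero real scalar; it thus suffices to find $\theta\in[0,\pi/2]$ and $\lambda\in\mathbb{R}\setminus\{0\}$ with
\begin{equation}
(r\cos\psi,\,s\sin\psi)=\lambda\bigl(\cos(\theta+\psi),\,\sin(\theta+\psi)\bigr).
\end{equation}

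The main obstacle is the sign and angle bookkeeping in the displayed equation: the continuous torus freedom has already been exhausted at this point, so all remaining normalization must come from discrete operations. Two such operations are available, namely negating the second generator (which replaces $(r,s)$ by $(-r,-s)$), or interchanging the roles of $(z_{0},w_{0})$ and its $J$-twin $(\sqrt{-1}r z_{0},\sqrt{-1}s w_{0})$ as the distinguished quasi-$J$-characteristic vector, which shifts $\psi$ and $\theta$ in a controlled way. A finite case analysis on the signs of $r,s$ together with the degenerate subcases $r=0$ and $s=0$ then forces $\theta+\psi$ into the target interval $[\psi,\psi+\pi/2]\subset[0,\pi]$ with $\theta\in[0,\pi/2]$. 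This sign audit is the only subtle piece; everything else follows mechanically from the lemmas of Section~2.
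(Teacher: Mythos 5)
Your strategy is the one the paper itself uses: invoke the dichotomy of \prettyref{lem:For-any-planeexistence}, rotate the quasi-$J$-characteristic vector to the nonnegative real form $(\cos\psi,\sin\psi)$ by a torus element, and then try to force the second generator into the form $\sqrt{-1}(\cos(\theta+\psi),\sin(\theta+\psi))$ with $\theta\in[0,\frac{\pi}{2}]$ by discrete moves. The paper is no more explicit than you are at this last step (it simply asserts that appropriate $(\theta,\psi)$ exist), but the ``sign audit'' you defer is precisely where the argument fails, and it cannot be completed for the stated range $[0,\frac{\pi}{2}]^{2}$. After your normalization the direction of the second generator is a point of $\mathbb{R}\mathbb{P}^{1}$, while the target angles $\theta+\psi$ with $\theta\in[0,\frac{\pi}{2}]$ sweep only half of $\mathbb{R}\mathbb{P}^{1}$; the remaining discrete moves (negating a generator, the swap induced by acting with $(\sqrt{-1},\sqrt{-1})$, and the coordinatewise sign flips $(\pm1,\pm1)$) generate a finite group whose orbit need not meet that half.

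Concretely, take
\begin{equation}
P=span_{\mathbb{R}}\bigl((\sqrt{3},-1),\sqrt{-1}(\sqrt{3},1)\bigr).
\end{equation}
Every nonzero $(z,w)\in P$ is quasi-$J$-characteristic with $s=-r$, and every such $(z,w)$ satisfies $|z|/|w|=\sqrt{3}$, so your normalization always yields $\psi=\frac{\pi}{6}$ with second generator proportional to $\sqrt{-1}(\sqrt{3},-1)$, forcing $\theta+\frac{\pi}{6}\equiv-\frac{\pi}{6}$ and hence $\theta\equiv\frac{2\pi}{3}\pmod{\pi}$. Checking all torus elements directly: $(e^{i\alpha},e^{i\beta})P$ contains a nonzero real vector only when $\sin(\alpha+\beta)=0$; for $\alpha+\beta\equiv0$ that vector is a multiple of $(\sqrt{3},-1)$, whose components have opposite signs, and for $\alpha+\beta\equiv\pi$ one gets $\psi=\frac{\pi}{6}$ and $\theta=\frac{2\pi}{3}$. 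So no torus element carries $P$ to any plane of the family \prettyref{eq:orbit}. In the Gluck--Warner picture used later in the paper this is transparent: the torus orbit of a plane is the product of the latitude circles at heights $x=\langle\iota_{1}(P),b_{1}^{+}\rangle$ and $y=\langle\iota_{2}(P),b_{1}^{-}\rangle$, the family \prettyref{eq:orbit} realizes exactly the pairs with $0\le x\le1$ and $-1\le y\le x$ (up to $(x,y)\mapsto(-x,-y)$), and the plane above has $(x,y)=(\frac{1}{2},1)$. The gap is therefore not a bookkeeping nuisance: either the parameter range must be enlarged (e.g.\ to $\theta\in[0,\pi]$, $\psi\in[0,\frac{\pi}{2}]$, suitably interpreted), or some further identification must be proved; as written, neither your argument nor the statement survives this example.
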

Gluck and Warner in \cite{GW} gave an isomorphism from $Gr_{2}^{+}(\mathbb{R}^{4})$
to $S^{2}\times S^{2}$ in \prettyref{eq:iso}, that is expressed
explicitly by\begin{equation}
\begin{array}{c}
\iota(v_{1}\wedge v_{2}):=(\frac{\sqrt{2}}{2}(v_{1}\wedge v_{2}+(v_{1}\wedge v_{2})^{\bot}),\frac{\sqrt{2}}{2}(v_{1}\wedge v_{2}-(v_{1}\wedge v_{2})^{\bot})),\end{array}\label{eq:map}\end{equation}
in which $\left\{ v_{1},v_{2}\right\} $ is an orthonormal basis of
the plane spanned by them in $Gr_{2}^{+}(\mathbb{R}^{4})$ and $(v_{1}\wedge v_{2})^{\bot}$
here denotes the wedge of the orthonormal basis of the complement
of $v_{1}\wedge v_{2}$, and so we have the Cartesian product decomposition
\begin{equation}
Gr_{2}^{+}(\mathbb{R}^{4})\cong S^{2}\times S^{2}.\label{eq:iso}\end{equation}

In \cite{GH}, Goodey and Howard described the bases \begin{equation}
b_{1}^{+}=\frac{\sqrt{2}}{2}(e_{1}\wedge e_{2}+e_{3}\wedge e_{4}),\, b_{2}^{+}=\frac{\sqrt{2}}{2}(e_{1}\wedge e_{3}-e_{2}\wedge e_{4}),\, b_{3}^{+}=\frac{\sqrt{2}}{2}(e_{1}\wedge e_{4}+e_{2}\wedge e_{3}),\prettyref{eq:1c}\label{eq:1c}\end{equation}
and \begin{equation}
b_{1}^{-}=\frac{\sqrt{2}}{2}(e_{1}\wedge e_{2}-e_{3}\wedge e_{4}),\, b_{2}^{-}=\frac{\sqrt{2}}{2}(e_{1}\wedge e_{3}+e_{2}\wedge e_{4}),\, b_{3}^{-}=\frac{\sqrt{2}}{2}(e_{1}\wedge e_{4}-e_{2}\wedge e_{3}),\prettyref{eq:2c}\label{eq:2c}\end{equation}
where $\left\{ e_{1},e_{2},e_{3},e_{4}\right\} $ is the basis for
$\mathbb{R}^{4}$, for the components $\wedge_{+}^{2}(\mathbb{R}^{4})$
and $\wedge_{-}^{2}(\mathbb{R}^{4})$ respectively in the vector space
decomposition \begin{equation}
\wedge^{2}(\mathbb{R}^{4})=\wedge_{+}^{2}(\mathbb{R}^{4})\oplus\wedge_{-}^{2}(\mathbb{R}^{4}).\end{equation}

In $\mathbb{C}^{2}$, we set $\left\{ e_{1},e_{2},e_{3},e_{4}\right\} $
to be a basis of $\mathbb{C}^{2}$ such that $(z,w)=Re(z)e_{1}+Im(z)e_{2}+Re(w)e_{3}+Im(w)e_{4}$
for any $(z,w)\in\mathbb{C}^{2}$. Thus, we can identify the orbit
space \prettyref{eq:orbit} with a subspace of $S^{2}\times S^{2}$,
and it turns out that
\begin{lem}
The orbit space \prettyref{eq:orbit} is in the quotient space of
the Cartesian product $S^{1}\times S^{1}$ by identifying antipodal
points, where the circles $S^{1}$ are the equators of the spheres
$S^{2}$ with $b_{2}^{+}$ and $b_{2}^{-}$ as north poles in \prettyref{eq:iso}.
\begin{proof}
Let \begin{equation}
P_{\theta,\psi}:=span_{\mathbb{R}}((\cos\psi,\sin\psi),(\sqrt{-1}\cos(\theta+\psi),\sqrt{-1}\sin(\theta+\psi)))\label{eq:represe}\end{equation}
in the orbit space \prettyref{eq:orbit}. Then from \prettyref{eq:map}
the first component of $\iota(P_{\theta,\psi})$, denoted by $\iota_{1}(P_{\theta,\psi})$,
is \begin{equation}
\begin{array}{lll}
\iota_{1}(P_{\theta,\psi}) & = & \frac{\sqrt{2}}{2}(\cos\psi\, e_{1}+\sin\psi\, e_{3})\wedge(\cos(\theta+\psi)\, e_{2}+\sin(\theta+\psi)\, e_{4})\\
 &  & \:+\frac{\sqrt{2}}{2}(\sin\psi\, e_{1}-\cos\psi\, e_{3})\wedge(\sin(\theta+\psi)\, e_{2}-\cos(\theta+\psi)\, e_{4})\\
 & = & \frac{\sqrt{2}}{2}\cos\theta(e_{1}\wedge e_{2}+e_{3}\wedge e_{4})+\frac{\sqrt{2}}{2}\sin\theta(e_{1}\wedge e_{4}+e_{2}\wedge e_{3})\\
 & = & \cos\theta\, b_{1}^{+}+\sin\theta\, b_{3}^{+},\end{array}\end{equation}
and analogously the second component of $\iota(P_{\theta,\psi})$,
denoted by $\iota_{2}(P_{\theta,\psi})$, is \begin{equation}
\iota_{2}(P_{\theta,\psi})=\cos(2\psi+\theta)\, b_{1}^{-}+\sin(2\psi+\theta)\, b_{3}^{-}.\end{equation}
Hence $\iota_{1}(P_{\theta,\psi})$ and $\iota_{2}(P_{\theta,\psi})$
don't have $b_{2}^{+}$ or $b_{2}^{-}$ component, then the claim
follows.
\end{proof}
\end{lem}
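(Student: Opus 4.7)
The plan is to compute the Gluck--Warner image of a representative plane $P_{\theta,\psi}$ in the orbit space \prettyref{eq:orbit} directly in coordinates, and then read off that the images lie on specific equatorial circles of the two $S^2$ factors in $Gr_{2}^{+}(\mathbb{R}^{4})\cong S^{2}\times S^{2}$.

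First I would pick an orthonormal basis of $P_{\theta,\psi}$ and of its orthogonal complement. The two generators of $P_{\theta,\psi}$ written in the real basis $\{e_{1},e_{2},e_{3},e_{4}\}$ are
\begin{equation}
v_{1}=\cos\psi\,e_{1}+\sin\psi\,e_{3},\qquad v_{2}=\cos(\theta+\psi)\,e_{2}+\sin(\theta+\psi)\,e_{4},
\end{equation}
which are already orthonormal in $\mathbb{R}^{4}$. A natural orthonormal basis of the orthogonal complement is
\begin{equation}
v_{3}=\sin\psi\,e_{1}-\cos\psi\,e_{3},\qquad v_{4}=\sin(\theta+\psi)\,e_{2}-\cos(\theta+\psi)\,e_{4}.
\end{equation}
Then $(v_{1}\wedge v_{2})^{\bot}=v_{3}\wedge v_{4}$, and from \prettyref{eq:map} we have $\iota_{1}(P_{\theta,\psi})=\tfrac{\sqrt{2}}{2}(v_{1}\wedge v_{2}+v_{3}\wedge v_{4})$ and $\iota_{2}(P_{\theta,\psi})=\tfrac{\sqrt{2}}{2}(v_{1}\wedge v_{2}-v_{3}\wedge v_{4})$.

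Next I would expand the two wedge products $v_{1}\wedge v_{2}$ and $v_{3}\wedge v_{4}$ and collect terms in the standard basis of $\wedge^{2}(\mathbb{R}^{4})$. Elementary trigonometric simplification (sum-to-product identities in $\theta$ for the sum, and in $2\psi+\theta$ for the difference) is expected to give
\begin{equation}
\iota_{1}(P_{\theta,\psi})=\cos\theta\,b_{1}^{+}+\sin\theta\,b_{3}^{+},\qquad \iota_{2}(P_{\theta,\psi})=\cos(2\psi+\theta)\,b_{1}^{-}+\sin(2\psi+\theta)\,b_{3}^{-},
\end{equation}
with no $b_{2}^{\pm}$ components. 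This is exactly the statement that each image lies on the equator of the corresponding $S^{2}$ with $b_{2}^{\pm}$ as north pole.

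Finally, I would justify the antipodal identification. As $(\theta,\psi)$ ranges over $[0,\tfrac{\pi}{2}]^{2}$, the angle $\theta$ in $\iota_{1}$ sweeps a half circle, and the angle $2\psi+\theta$ in $\iota_{2}$ sweeps a full turn; but replacing $(v_{1},v_{2})\mapsto(-v_{1},-v_{2})$ leaves $P_{\theta,\psi}$ unchanged while sending each wedge to its negative, which corresponds to antipodal points on both equators simultaneously. Hence the orbit space injects into the quotient of $S^{1}\times S^{1}$ by simultaneous antipodal identification, as claimed. The main calculational obstacle is the term-by-term trigonometric collapse in the previous paragraph; the only conceptual subtlety is keeping careful track of signs when orienting $(v_{1}\wedge v_{2})^{\bot}$ so that the Gluck--Warner map \prettyref{eq:map} gives the stated $b^{+}$ and $b^{-}$ expansions consistently.
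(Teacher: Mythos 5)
Your proposal is correct and follows essentially the same route as the paper: compute $\iota(P_{\theta,\psi})$ in the basis $\{e_i\}$, collapse the trigonometric terms to get $\iota_1=\cos\theta\,b_1^{+}+\sin\theta\,b_3^{+}$ and $\iota_2=\cos(2\psi+\theta)\,b_1^{-}+\sin(2\psi+\theta)\,b_3^{-}$, and observe the vanishing of the $b_2^{\pm}$ components. Your final paragraph on the antipodal identification is a small bonus of explicitness over the paper's ``then the claim follows,'' but it is not a different method.
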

When acting $2$-planes in $Gr_{2}(\mathbb{C}^{2})$ by torus actions,
we have 
\begin{lem}
The torus action of a $2$-plane in $Gr_{2}(\mathbb{C}^{2})$ is a
revolution along the axes $b_{1}^{+}$ and $b_{1}^{-}$ on the component
spheres $S^{2}$ in \prettyref{eq:iso}.\end{lem}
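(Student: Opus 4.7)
The plan is to exploit the equivariance of Gluck and Warner's isomorphism $\iota:Gr_{2}^{+}(\mathbb{R}^{4})\rightarrow S^{2}\times S^{2}$ under $SO(4)$. Under the identification $\mathbb{C}^{2}\cong\mathbb{R}^{4}$ fixed in the preceding section, the $U(1)\times U(1)$-action on $\mathbb{C}^{2}$ corresponds to the commuting pair of rotations $R_{\alpha}$ (in the $(e_{1},e_{2})$-plane) and $R_{\beta}$ (in the $(e_{3},e_{4})$-plane), both lying in $SO(4)$. Since any element of $SO(4)$ preserves the splitting $\wedge^{2}(\mathbb{R}^{4})=\wedge_{+}^{2}(\mathbb{R}^{4})\oplus\wedge_{-}^{2}(\mathbb{R}^{4})$, the torus acts on each factor of $S^{2}\times S^{2}$ separately. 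The lemma will follow once each of $R_{\alpha}$ and $R_{\beta}$ is shown to fix the axes $b_{1}^{+}$ and $b_{1}^{-}$ on the respective component spheres.

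To carry this out, I would extend $R_{\alpha}$ and $R_{\beta}$ to $\wedge^{2}(\mathbb{R}^{4})$ by wedge multiplicativity, i.e.\ $R(e_{i}\wedge e_{j})=R(e_{i})\wedge R(e_{j})$. The key observation is that $R_{\alpha}(e_{1}\wedge e_{2})=e_{1}\wedge e_{2}$ and $R_{\alpha}(e_{3}\wedge e_{4})=e_{3}\wedge e_{4}$, with the symmetric statement for $R_{\beta}$; this is immediate because a planar rotation preserves the area form of its own plane and fixes the transverse plane pointwise. Consequently both rotations fix $b_{1}^{\pm}=\tfrac{\sqrt{2}}{2}(e_{1}\wedge e_{2}\pm e_{3}\wedge e_{4})$. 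For the remaining basis bivectors $b_{2}^{\pm},b_{3}^{\pm}$, I would expand $R_{\alpha}(e_{1}\wedge e_{3})$, $R_{\alpha}(e_{2}\wedge e_{4})$, $R_{\alpha}(e_{1}\wedge e_{4})$ and $R_{\alpha}(e_{2}\wedge e_{3})$, and regroup to recognize linear combinations of $b_{2}^{\pm}$ and $b_{3}^{\pm}$. The expected outcome is that $R_{\alpha}|_{\wedge_{+}^{2}}$ is rotation by $\alpha$ in the $(b_{2}^{+},b_{3}^{+})$-plane and $R_{\alpha}|_{\wedge_{-}^{2}}$ is rotation by $-\alpha$ in the $(b_{2}^{-},b_{3}^{-})$-plane; the analogous computation for $R_{\beta}$ gives rotation by $\beta$ in each case.

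Combining these, the torus element $R_{\alpha}R_{\beta}$ acts on the first $S^{2}$-factor as rotation through angle $\alpha+\beta$ about $b_{1}^{+}$, and on the second as rotation through angle $\beta-\alpha$ about $b_{1}^{-}$. Equivariance of $\iota$ then says that, for any 2-plane $P$, the torus orbit of $P$ maps under $\iota$ to a pair of latitude circles perpendicular to $b_{1}^{+}$ and $b_{1}^{-}$ respectively, which is precisely a revolution on each factor sphere. As a consistency check, one can verify this \emph{a posteriori} using the previous lemma: applying $R_{\alpha}R_{\beta}$ to $P_{\theta,\psi}$ should leave the $b_{1}^{+}$-coefficient $\cos\theta$ of $\iota_{1}(P_{\theta,\psi})$ and the $b_{1}^{-}$-coefficient $\cos(2\psi+\theta)$ of $\iota_{2}(P_{\theta,\psi})$ unchanged, while cycling the $(b_{2}^{\pm},b_{3}^{\pm})$-coordinates.

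The main obstacle is merely the bookkeeping of the action of $R_{\alpha}$ and $R_{\beta}$ on the six basis bivectors $e_{i}\wedge e_{j}$, and getting the signs right when regrouping into the $b_{k}^{\pm}$-basis; no deeper input is required beyond verifying that the two generating circle subgroups of the torus both stabilize the $b_{1}^{\pm}$-axes.
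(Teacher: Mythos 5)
Your proof is correct, and it takes a genuinely different route from the paper's. The paper argues by brute force: it writes down the plane $P_{\theta,\psi}^{\alpha,\beta}$ obtained from the torus element $(e^{\sqrt{-1}\alpha},e^{\sqrt{-1}\beta})$, produces an orthonormal basis for it and for its orthogonal complement, expands the wedge products, and reads off that $\iota_{1}$ keeps the coefficient $\cos\theta$ on $b_{1}^{+}$ while the $(b_{2}^{+},b_{3}^{+})$-part rotates through $\alpha+\beta$ (and similarly $\cos(2\psi+\theta)$ on $b_{1}^{-}$ with rotation through $\alpha-\beta$). You instead compute the induced linear action of the torus on $\wedge^{2}(\mathbb{R}^{4})$ and use $SO(4)$-equivariance of the Gluck--Warner map: since $R_{\alpha}$ and $R_{\beta}$ each fix $e_{1}\wedge e_{2}$ and $e_{3}\wedge e_{4}$, they fix $b_{1}^{\pm}$, and a short expansion shows they act as rotations in the $(b_{2}^{\pm},b_{3}^{\pm})$-planes; I have checked that your expected angles ($\alpha+\beta$ on $\wedge_{+}^{2}$ and $\pm(\alpha-\beta)$ on $\wedge_{-}^{2}$, the sign being a matter of orientation convention) come out as you predict. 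Your approach buys two things: it proves the statement for every $2$-plane at once rather than only for the orbit representatives $P_{\theta,\psi}$, and it avoids having to exhibit the orthogonal complement plane explicitly. What the paper's computation buys in exchange is the explicit coordinate formulas \prettyref{eq:1134} and \prettyref{eq:22twet}, which are reused later (e.g.\ to identify $x=\cos\theta$, $y=\cos(2\psi+\theta)$ in \prettyref{thm:The-kernel-of}); if you adopt your argument you would still want to record those formulas, though they now follow instantly by applying your rotation matrices to the output of the preceding lemma. The only point you should make explicit is why $\iota$ is equivariant, namely that the Hodge-star (the $\perp$ in \prettyref{eq:map}) commutes with orientation-preserving isometries, so that $SO(4)$ preserves the splitting $\wedge^{2}=\wedge_{+}^{2}\oplus\wedge_{-}^{2}$; this is standard and your sketch already relies on it correctly.
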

\begin{proof}
Let $P_{\theta,\psi}^{\alpha,\beta}$ be the $2$-plane obtained from
a torus action $(e^{\sqrt{-1}\alpha},e^{\sqrt{-1}\beta})$ on the
$2$-plane $P_{\theta,\psi}$, \prettyref{eq:represe}, then \begin{equation}
P_{\theta,\psi}^{\alpha,\beta}=span_{\mathbb{R}}((\cos\psi\, e^{\sqrt{-1}\alpha},\sin\psi\, e^{\sqrt{-1}\beta}),\sqrt{-1}(e^{\sqrt{-1}\alpha}\cos(\theta+\psi),e^{\sqrt{-1}\beta}\sin(\theta+\psi))).\end{equation}
and it orthogonal complement plane $(P_{\theta,\psi}^{\alpha,\beta})^{\bot}$
can be expressed as\begin{equation}
span_{\mathbb{R}}((\sin\psi\, e^{\sqrt{-1}\alpha},-\cos\psi\, e^{\sqrt{-1}\beta}),(e^{\sqrt{-1}(\alpha+\frac{\pi}{2})}\sin(\theta+\psi),e^{\sqrt{-1}(\beta-\frac{\pi}{2})}\cos(\theta+\psi))).\end{equation}

Therefore, applying the map \prettyref{eq:map} to $P_{\theta,\psi}^{\alpha,\beta}$,
we get the first component of $\iota(P_{\theta,\psi}^{\alpha,\beta})$,

\begin{equation}
\begin{array}{lll}
\iota_{1}(P_{\theta,\psi}^{\alpha,\beta}) & = & \frac{\sqrt{2}}{2}(\cos\theta\, e_{1}\wedge e_{2}-\sin\theta\sin(\alpha+\beta)\, e_{1}\wedge e_{3}+\sin\theta\cos(\alpha+\beta)\, e_{1}\wedge e_{4}\\
 &  & \,\,+\sin\theta\cos(\alpha+\beta)\, e_{2}\wedge e_{3}+\sin\theta\sin(\alpha+\beta)\, e_{2}\wedge e_{4}+\cos\theta\, e_{3}\wedge e_{4})\\
 & = & \cos\theta\, b_{1}^{+}-\sin\theta\sin(\alpha+\beta)\, b_{2}^{+}+\sin\theta\cos(\alpha+\beta)\, b_{3}^{+},\end{array}\label{eq:1134}\end{equation}
and the second component of $\iota(P_{\theta,\psi}^{\alpha,\beta})$,\begin{equation}
\begin{array}{lll}
\iota_{2}(P_{\theta,\psi}^{\alpha,\beta}) & = & \cos(2\psi+\theta)\, b_{1}^{-}-\sin(2\psi+\theta)\sin(\alpha-\beta)\, b_{2}^{-}\\
 &  & \quad+\sin(2\psi+\theta)\cos(\alpha-\beta)\, b_{3}^{-}.\end{array}\label{eq:22twet}\end{equation}
Thus we can see that the claim follows from the expressions of $\iota_{1}(P_{\theta,\psi}^{\alpha,\beta})$
and $\iota_{2}(P_{\theta,\psi}^{\alpha,\beta})$ in \prettyref{eq:1134}
and \prettyref{eq:22twet}.
\end{proof}
One can parametrize the two spheres by\begin{equation}
\xi(x,\phi_{1}):=(\sqrt{1-x^{2}}\cos\phi_{1},\sqrt{1-x^{2}}\sin\phi_{1},x)\label{eq:p1}\end{equation}
and \begin{equation}
\eta(y,\phi_{2}):=(\sqrt{1-y^{2}}\cos\phi_{2},\sqrt{1-y^{2}}\sin\phi_{2},y).\label{eq:p2}\end{equation}
In \cite{GH} , Goodey and Howard characterized the kernel of general
cosine transform \begin{equation}
\mathcal{C}(f)(P):=\int_{Q\in Gr_{2}(\mathbb{R}^{4})}|\langle P,Q\rangle|f(Q)dQ\label{eq:cosi}\end{equation}
by Legendre polynomials \begin{equation}
p_{n}(x)=\frac{1}{2^{n}n!}\frac{d^{n}}{dx^{n}}(x^{2}-1)^{n}.\end{equation}
However, if $f$ is a $\mathcal{U}(1)\times\mathcal{U}(1)$-invariant
function on $Gr_{2}(\mathbb{C}^{2})$, then it is independent of $\phi_{1}$
and $\phi_{2}$. We have 
\begin{thm}
\label{thm:The-kernel-of}The kernel of cosine transform\begin{equation}
\mathcal{C_{T}}(f)(P):=\int_{Q\in Gr_{2}(\mathbb{C}^{2})}|\langle P,Q\rangle|f(Q)dQ,\label{eq:ct}\end{equation}
in which $f\in L^{2}(Gr_{2}(\mathbb{C}^{2}))$ is torus invariant,
is \begin{equation}
\left\{ \sum_{\begin{array}{c}
m-n\,\mbox{even,}|m-n|>2\end{array}}c_{m,n\,}p_{m}(\cos\theta)p_{n}(\cos(2\psi+\theta))\right\} ,\label{eq:kkk}\end{equation}
in which $(\theta,\psi)$\textup{ are the parameters of the orbit}
in \prettyref{eq:orbit}\textup{ that $Q$ belongs to}.\end{thm}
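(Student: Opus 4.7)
The plan is to transport the problem from $Gr_{2}(\mathbb{C}^{2})$ to $S^{2}\times S^{2}$ via the Gluck--Warner identification \prettyref{eq:iso} and then invoke the Goodey--Howard kernel characterization of the general cosine transform \prettyref{eq:cosi}. By the two preceding lemmas, the torus action on $Gr_{2}(\mathbb{C}^{2})$ becomes, under $\iota$, the pair of independent rotations of the two $S^{2}$-factors about the axes $b_{1}^{+}$ and $b_{1}^{-}$. Consequently, a torus-invariant $f\in L^{2}(Gr_{2}(\mathbb{C}^{2}))$ corresponds to a function $\tilde{f}$ on $S^{2}\times S^{2}$ that is zonal about those two axes; in the coordinates $x,y$ of \prettyref{eq:p1} and \prettyref{eq:p2}, $\tilde{f}$ is independent of $\phi_{1},\phi_{2}$ and hence admits an expansion
\[
\tilde{f}(\xi,\eta)=\sum_{m,n\geq 0}c_{m,n}\,p_{m}(x)\,p_{n}(y)
\]
in the complete zonal-harmonic basis $\{p_{m}(x)p_{n}(y)\}$.

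Next, I would account for the descent from the oriented Grassmannian $Gr_{2}^{+}(\mathbb{R}^{4})\cong S^{2}\times S^{2}$ to the unoriented $Gr_{2}(\mathbb{R}^{4})$, which is where $Gr_{2}(\mathbb{C}^{2})$ actually lives after identification with real $2$-planes. This amounts to the identification $(\xi,\eta)\sim(-\xi,-\eta)$, and since $p_{m}(-t)=(-1)^{m}p_{m}(t)$, any function descending to the quotient must satisfy $c_{m,n}=0$ unless $m+n$ is even, producing precisely the ``$m-n$ even'' condition appearing in \prettyref{eq:kkk}.

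The crux is then to invoke the Goodey--Howard result \cite{GH}: on each irreducible $SO(4)$-subspace of $L^{2}(Gr_{2}(\mathbb{R}^{4}))$, indexed by a pair $(m,n)$ of spherical-harmonic degrees on the two factors, the cosine transform acts as multiplication by a scalar $\lambda_{m,n}$ that vanishes precisely when $|m-n|>2$. Since each zonal product $p_{m}(x)p_{n}(y)$ lies inside the corresponding isotypic piece, the restricted operator $\mathcal{C}_{T}$ inherits these eigenvalues, so its kernel is spanned by those $p_{m}(x)p_{n}(y)$ with $m-n$ even and $|m-n|>2$. Substituting $x=\cos\theta$ and $y=\cos(2\psi+\theta)$ from \prettyref{eq:represe} then yields the form asserted in \prettyref{eq:kkk}.

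The main obstacle I anticipate is the bookkeeping in the first step: verifying that under $\iota$ torus-invariance on $Gr_{2}(\mathbb{C}^{2})$ corresponds to zonal symmetry about precisely $b_{1}^{+}$ and $b_{1}^{-}$ with no twist, and choosing the measures so that $\{p_{m}(x)p_{n}(y)\}$ is really an orthogonal $L^{2}$-complete system for torus-invariant functions on $Gr_{2}(\mathbb{C}^{2})$. A secondary subtlety is that the zero-weight space of the $SU(2)\times SU(2)$ irrep $V_{m}\otimes V_{n}$ is nontrivial only when both $m$ and $n$ are even, so in fact only the even-even terms can carry nonzero coefficients; this is stronger than, but fully compatible with, the stated $m-n$ even condition.
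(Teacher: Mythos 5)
Your argument follows the same route as the paper's (much terser) proof: identify the torus orbits with the zonal coordinates $x=\cos\theta$, $y=\cos(2\psi+\theta)$ on $S^{2}\times S^{2}$ via the Gluck--Warner map, then quote the Goodey--Howard characterization of the kernel of the cosine transform on $Gr_{2}(\mathbb{R}^{4})$ as the span of the products $p_{m}(x)p_{n}(y)$ with $m-n$ even and $|m-n|\geq 3$; you simply supply the equivariance, descent, and completeness details that the paper leaves implicit. One caveat: your closing ``secondary subtlety'' is not correct --- the torus-fixed (zero-weight) line in $H_{m}\otimes H_{n}$ is the zonal product $p_{m}(x)p_{n}(y)$ for \emph{every} pair $(m,n)$, and after descent to the unoriented Grassmannian the only surviving constraint is that $m+n$ (equivalently $m-n$) be even, so both-odd products do occur (for instance $p_{1}(x)p_{1}(y)=xy$ is, up to normalization, $\langle P,\mathbb{C}\times\{0\}\rangle^{2}-\langle P,\{0\}\times\mathbb{C}\rangle^{2}$, a genuine torus-invariant function); the slip comes from reading the Legendre index $m$ as the $SU(2)$ highest weight, which is $2m$. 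Since you flag this as an aside and do not use it, the main proof stands as written.
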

\begin{proof}
From the expressions \prettyref{eq:1134} and \prettyref{eq:22twet},
we know that $x=\cos\theta$ and $y=\cos(2\psi+\theta)$ in \prettyref{eq:p1}
and \prettyref{eq:p2}. On the other hand, we also know that the kernel
of cosine transform \prettyref{eq:ct-ne} is exactly the sum of functions
$p_{m}(x)p_{n}(y)$, the product of Legendre polynomials, for $m-n$
even and $|m-n|\geq3$. So the claim follows.
\end{proof}
From \prettyref{thm:The-kernel-of}, we know that the cosine transform
annihilates the pieces of $p_{m}(\sin\theta)p_{n}(\sin(2\psi+\theta))$,
$|m-n|\geq3$, in the Legendre series expansions of functions in $L^{2}(Gr_{2}(\mathbb{C}^{2}))$,
and it follows from from the self-adjoint property of the cosine tranaform
operator that
\begin{cor}
\label{cor:The-image-of}The image of cosine transform\begin{equation}
\mathcal{C_{T}}(f)(P):=\int_{Q\in Gr_{2}(\mathbb{C}^{2})}|\langle P,Q\rangle|f(Q)dQ,\end{equation}
in which $f\in L^{2}(Gr_{2}(\mathbb{C}^{2}))$ is torus invariant,
is the space of torus invariant continuous functions on $Gr_{2}(\mathbb{C}^{2})$
which are not in \prettyref{eq:kkk}, in other words, \begin{equation}
im(\mathcal{C_{T}})=\left\{ \sum_{\begin{array}{c}
|m-n|=0\,\mbox{or}\,2\end{array}}c_{m,n\,}p_{m}(\cos\theta)p_{n}(\cos(2\psi+\theta))\right\} .\end{equation}

\end{cor}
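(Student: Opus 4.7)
The strategy is to combine Theorem \prettyref{thm:The-kernel-of}, which identifies the kernel of $\mathcal{C_{T}}$, with the self-adjointness of the cosine transform operator, and then read off the image as the orthogonal complement of the kernel inside the space of torus-invariant $L^{2}$ functions. The corollary's own hint (``by the self-adjoint property of the cosine transform operator'') makes this the natural route.

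First I would record that $\mathcal{C_{T}}$ is self-adjoint on the torus-invariant subspace of $L^{2}(Gr_{2}(\mathbb{C}^{2}))$: the integral kernel $|\langle P,Q\rangle|$ is symmetric in $P$ and $Q$, and torus-invariance of both the pairing and the Haar measure ensures that $\mathcal{C_{T}}$ sends torus-invariant functions to torus-invariant functions and is symmetric there. Self-adjointness in this invariant Hilbert space then gives $\mathrm{im}(\mathcal{C_{T}}) = \ker(\mathcal{C_{T}})^{\perp}$ (at the level of $L^{2}$ closures).

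Next I would set up the Legendre-product basis for torus-invariant $L^{2}$ functions in the coordinates $x=\cos\theta$, $y=\cos(2\psi+\theta)$ coming from the parametrizations \prettyref{eq:p1} and \prettyref{eq:p2} (via the identifications in \prettyref{eq:1134} and \prettyref{eq:22twet}). A torus-invariant function on $Gr_{2}(\mathbb{C}^{2})$ descends to a function of $(x,y)$ on $[-1,1]^{2}$, and $\{p_{m}(x)p_{n}(y)\}_{m,n\geq 0}$ is an orthogonal basis for $L^{2}([-1,1]^{2})$; hence any such $f$ has a Legendre expansion $\sum_{m,n} c_{m,n}\,p_{m}(x)p_{n}(y)$. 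The reduction from the oriented Grassmannian $Gr_{2}^{+}(\mathbb{R}^{4})\cong S^{2}\times S^{2}$ to the unoriented $Gr_{2}(\mathbb{R}^{4})$ forces only $m-n$ even coefficients to survive, because such functions are invariant under the joint antipodal map on $S^{2}\times S^{2}$ and $p_{k}(-t)=(-1)^{k}p_{k}(t)$.

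Finally I would combine these two inputs. Theorem \prettyref{thm:The-kernel-of} identifies $\ker(\mathcal{C_{T}})$ as the closed span of those $p_{m}(\cos\theta)p_{n}(\cos(2\psi+\theta))$ with $m-n$ even and $|m-n|>2$; within the $m-n$ even subspace (which is the full space of torus-invariant functions, by the previous paragraph) its orthogonal complement is exactly the closed span of $p_{m}(\cos\theta)p_{n}(\cos(2\psi+\theta))$ with $|m-n|\in\{0,2\}$. Together with the self-adjointness equality $\mathrm{im}(\mathcal{C_{T}}) = \ker(\mathcal{C_{T}})^{\perp}$, this gives the claim.

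The main obstacle I expect is really a bookkeeping one: to be sure that every $p_{m}(x)p_{n}(y)$ with $|m-n|\in\{0,2\}$ actually appears in the image, i.e. to rule out that some additional parity or antipodal constraint coming from the identification $Gr_{2}(\mathbb{C}^{2})\hookrightarrow Gr_{2}(\mathbb{R}^{4})\cong (S^{2}\times S^{2})/\mathbb{Z}_{2}$ secretly kills further modes. A secondary subtlety is the passage from the clean $L^{2}$-image statement that self-adjointness delivers to the description on continuous functions in the corollary; one wants to invoke either regularity of the cosine kernel (so that the image already sits inside $C(Gr_{2}(\mathbb{C}^{2}))$) or interpret the span on the right-hand side as a suitable closure.
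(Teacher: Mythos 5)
Your proposal is essentially the paper's own argument: the paper derives this corollary in a single sentence from Theorem \prettyref{thm:The-kernel-of} together with the self-adjointness of $\mathcal{C_{T}}$ (established later via Fubini's theorem in \prettyref{eq:selffor}), which is exactly the route you take, though you spell out the Legendre-basis bookkeeping and the parity reduction more explicitly than the paper does. The one point worth stressing is that the ``main obstacle'' you flag at the end is a genuine issue that neither you nor the paper resolves: for a self-adjoint operator with a continuous (hence Hilbert--Schmidt) kernel on a compact space, self-adjointness yields only $\overline{\mathrm{im}(\mathcal{C_{T}})}=\ker(\mathcal{C_{T}})^{\perp}$, and since such an operator is compact with infinitely many nonzero eigenvalues its range is not closed, so the stated equality can hold only if the right-hand side is read as a closed span (or if one adds the eigenvalue-decay analysis identifying which elements of $\ker(\mathcal{C_{T}})^{\perp}$ are actually attained, and separately addresses the claim that the image consists of continuous functions). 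Your instinct to interpret the span as a suitable closure is the correct repair, and it is the same tacit reading the paper relies on.
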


\section{On the Volume of a Convex Body of Mokowski sum }

Now we consider $HT_{4}(K+\sqrt{-1}K)$ for a convex body $K$ in
a plane $P$ in $Gr_{2}(\mathbb{C}^{2})$. First, we know that there
is a function $f$ on $Gr_{2}(\mathbb{C}^{2})$ such that \begin{equation}
HT_{4}(K+\sqrt{-1}K)=f(P)HT_{2}(K)^{2},\label{eq:fun}\end{equation}
for any convex body $K\subset P\in Gr_{2}(\mathbb{C}^{2})$. Furthermore,
$f(P)=0$ for all $P\in\mathbb{C}\mathbb{P}^{1}$. In general, by
the property of Euclidean volumes, we know that 

\begin{equation}
f(P)=\frac{c\sin^{2}\theta}{(Kl_{HT^{2}}(P))^{2}}\label{eq:func}\end{equation}
where $c=\frac{HT^{4}(B_{F}^{4})}{vol_{4}(B_{F}^{4})}$ for the unit
ball $B_{F}^{4}$ of $(\mathbb{C}^{2},F)$ is a constant with respect
to $P$ and $Kl_{HT^{2}}$ is the Klain function of the Holmes-Thompson
area, $HT^{2}$. 

In the next, we are going to compute the cosine transform to get the
Klain function of the Holmes-Thompson area, $HT^{2}$. First, the
projection area of a square $E$ spanned by $(\cos\psi_{0},\sin\psi_{0})$
and $(\sqrt{-1}\cos(\theta_{0}+\psi_{0}),\sqrt{-1}\sin(\theta_{0}+\psi_{0}))$
from one plane, \begin{equation}
P_{\theta_{0},\psi_{0}}:=span_{\mathbb{R}}((\cos\psi_{0},\sin\psi_{0}),(\sqrt{-1}\cos(\theta_{0}+\psi_{0}),\sqrt{-1}\sin(\theta_{0}+\psi_{0})))\end{equation}
in \prettyref{eq:orbit} to another arbitrary plane\begin{equation}
P_{\theta,\psi}^{\alpha,\beta}:=span_{\mathbb{R}}((e^{i\alpha}\cos\psi,e^{i\beta}\sin\psi),(\sqrt{-1}e^{i\alpha}\cos(\theta+\psi),\sqrt{-1}e^{i\beta}\sin(\theta+\psi)))\end{equation}
is\begin{equation}
\begin{array}{lll}
\pi_{P_{\theta,\psi}^{\alpha,\beta}}(E) & := & |\langle(\cos\psi_{0},\sin\psi_{0}),(e^{i\alpha}\cos\psi,e^{i\beta}\sin\psi)\rangle_{\mathbb{R}}\\
 &  & \,\,\,\cdot\langle\sqrt{-1}(\cos\phi_{0},\sin\phi_{0}),\sqrt{-1}(e^{i\alpha}\cos\phi,e^{i\beta}\sin\phi)\rangle_{\mathbb{R}}\\
 &  & \,\,\,\,-\langle\sqrt{-1}(\cos\phi_{0},\sin\phi_{0}),(e^{i\alpha}\cos\psi,e^{i\beta}\sin\psi)\rangle_{\mathbb{R}}\\
 &  & \,\,\,\,\,\,\,\cdot\langle(\cos\psi_{0},\sin\psi_{0}),\sqrt{-1}(e^{i\alpha}\cos\phi,e^{i\beta}\sin\phi)\rangle_{\mathbb{R}}|\end{array}\end{equation}
for $\phi_{0}:=\theta_{0}+\psi_{0}$ and $\phi:=\theta+\psi$. By
spherical harmonics, there exists a Crofton measure for any complex
Finsler metric under some smoothness condition, and we have shown
the following
\begin{prop}
The Crofton measure for $\mathcal{U}(1)\times\mathcal{U}(1)$ invariant
complex Finsler metric $F$ on \textup{$\mathbb{C}^{2}$ is }$\mathcal{U}(1)\times\mathcal{U}(1)$
invariant. 
\end{prop}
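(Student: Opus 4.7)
The plan is to repeat the argument that established the earlier \emph{Invariance property of Crofton measure} theorem in Section 2, now applied to the Crofton measure whose existence has just been asserted by the spherical harmonics construction under the stated smoothness hypothesis. The essential ingredient is again the injectivity of the cosine transform on $S^{3}$.

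First, I would write $d\mu = f(\xi_{1},\xi_{2},\eta)\, d\xi_{1}\, d\xi_{2}\, d\eta$ in the parametrization used throughout the paper, so that for $(z,w)=(e^{i\bar{\xi}_{1}}\cos\bar{\eta}, e^{i\bar{\xi}_{2}}\sin\bar{\eta}) \in S^{3}$,
\begin{equation}
F(z,w) = \int_{0}^{2\pi}\!\!\int_{0}^{2\pi}\!\!\int_{0}^{2\pi} |\cos(\xi_{1}-\bar{\xi}_{1})\cos\eta + \cos(\xi_{2}-\bar{\xi}_{2})\sin\eta|\, f(\xi_{1},\xi_{2},\eta)\, d\xi_{1}\, d\xi_{2}\, d\eta.
\end{equation}
Applying the same Crofton formula to $F(e^{i\tilde{\xi}_{1}}z, e^{i\tilde{\xi}_{2}}w)$ and then performing the change of variables $\xi_{i} \mapsto \xi_{i}+\tilde{\xi}_{i}$ produces a second representation in which the kernel is unchanged but the density is replaced by $f(\xi_{1}+\tilde{\xi}_{1}, \xi_{2}+\tilde{\xi}_{2}, \eta)$.

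The $\mathcal{U}(1)\times\mathcal{U}(1)$ invariance of $F$ then equates two cosine transforms against the same kernel. Invoking injectivity of the cosine transform, Proposition 3.4.12 in \cite{G}, forces
\begin{equation}
f(\xi_{1},\xi_{2},\eta) = f(\xi_{1}+\tilde{\xi}_{1},\xi_{2}+\tilde{\xi}_{2},\eta)
\end{equation}
for all $\tilde{\xi}_{1}, \tilde{\xi}_{2} \in [0,2\pi]$, which is exactly the invariance of $\mu$ under the torus action, and which in particular exhibits $f$ as depending only on $\eta$.

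The main obstacle, as in the earlier version of the theorem, lies in verifying the hypotheses needed to invoke injectivity of the cosine transform, namely that the two densities to be compared lie in the class to which Gardner's proposition applies. Under the smoothness condition that guarantees the spherical harmonics construction of $\mu$, this is immediate because $f$ is then a bona fide continuous function on the $(\xi_{1},\xi_{2},\eta)$-torus; without such a condition, the argument would have to be re-run in the distributional setting using the extended cosine transform surjectivity developed in Section~3. I would explicitly flag the smoothness hypothesis inherited from the spherical harmonics step to keep the invocation of Proposition 3.4.12 clean.
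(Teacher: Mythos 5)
Your proposal is correct and is essentially identical to the paper's own argument: the paper proves this statement once, as the ``Invariance property of Crofton measure'' theorem in Section~2, via exactly the change-of-variables computation followed by the injectivity of the cosine transform (Proposition 3.4.12 in \cite{G}), and in Section~6 it merely restates the result with the phrase ``we have shown the following.'' Your added remark flagging the smoothness hypothesis needed for the spherical-harmonics existence step and for a clean invocation of injectivity is a sensible clarification but does not change the route.
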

Furthermore, one can obtain $\mathcal{U}(1)\times\mathcal{U}(1)$
invariant Crofton measure for $HT^{2}$ from the one for the metric.

\section{The Cosine Transform of the Complex $l^{1}$ Space's Singular Measure }

Let us consider the case of complex $l^{1}$ for the Kain function
of $HT^{2}$. As we know, the Crofton measure $\mu$ for $HT^{2}$
is induced from the intersection map \begin{equation}
\pi:Gr_{3}(\mathbb{R}^{4})\times Gr_{3}(\mathbb{R}^{4})\setminus\triangle\rightarrow Gr_{2}(\mathbb{R}^{4}),\label{eq:inter}\end{equation}
and indeed \begin{equation}
\mu=\delta_{\left\{ \mathbb{C}\times\left\{ 0\right\} \right\} }+\delta_{\left\{ \left\{ 0\right\} \times\mathbb{C}\right\} }+\frac{1}{4\pi}\lambda_{\mathbb{T}}\end{equation}
where $\lambda_{T}$ is the uniform measure on \begin{equation}
\mathbb{T}:=\left\{ span(v,w):v\in\mathbb{C}\times\left\{ 0\right\} ,w\in\left\{ 0\right\} \times\mathbb{C}\right\} .\label{eq:tor}\end{equation}
Applying the cosine transform,\begin{equation}
\begin{array}{lll}
\mathcal{C_{T}}(f)(P) & = & \int_{Q\in Gr_{2}(\mathbb{C}^{2})}|\langle P,Q\rangle|d\mu(Q)\\
 & = & |\langle P,\mathbb{C}\times\left\{ 0\right\} \rangle|+|\langle P,\left\{ 0\right\} \times\mathbb{C}\rangle|+\frac{1}{4\pi}\int_{Q\in\mathbb{T}}|\langle P,Q\rangle|d\lambda_{T}(Q).\end{array}\label{eq:ct-ne}\end{equation}
Let \begin{equation}
P=span_{\mathbb{R}}((\cos\psi,\sin\psi),(\sqrt{-1}\cos(\theta+\psi),\sqrt{-1}\sin(\theta+\psi)))\end{equation}
and \begin{equation}
Q=span_{\mathbb{R}}(e^{\sqrt{-1}x},0),(0,e^{\sqrt{-1}y}),\end{equation}
then \begin{equation}
|\langle P,\mathbb{C}\times\left\{ 0\right\} \rangle|=|\cos(\theta+\psi)\cos\psi|,\end{equation}
\begin{equation}
|\langle P,\left\{ 0\right\} \times\mathbb{C}\rangle|=|\sin(\theta+\psi)\sin\psi|\end{equation}
and\begin{equation}
\begin{array}{lll}
I:=\int_{Q\in\mathbb{T}}|\langle P,Q\rangle|d\lambda_{\mathbb{T}}(Q) & = & \int_{0}^{2\pi}\int_{0}^{2\pi}|\cos\psi\sin(\theta+\psi)\cos x\sin y\\
 &  & \quad+\sin\psi\cos(\theta+\psi)\sin x\cos y|dxdy.\end{array}\label{eq:di}\end{equation}
Thus, we have\begin{equation}
\begin{array}{lll}
\mathcal{C_{T}}(f)(P) & = & |\cos(\theta+\psi)\cos\psi|+|\sin(\theta+\psi)\sin\psi|\\
 &  & \,+\frac{1}{4\pi}\int_{0}^{2\pi}\int_{0}^{2\pi}|\cos\psi\sin(\theta+\psi)\cos x\sin y\\
 &  & \qquad\quad\qquad\quad+\sin\psi\cos(\theta+\psi)\sin x\cos y|dxdy.\end{array}\label{eq:cosin}\end{equation}

\begin{example}
If $P=\mathbb{C}\times\left\{ 0\right\} $ or $\left\{ 0\right\} \times\mathbb{C}$,
then we know that $P$ is a Euclidean $2$-plane, so the Klain function
of $HT^{2}$ at $P$ is $1$. On the other hand, we also get $\mathcal{C_{T}}(f)(P)=1$
from the right hand side of \prettyref{eq:ct-ne}; If $P$ is a complex
line, $span_{\mathbb{R}}((\cos\psi,\sin\psi),\sqrt{-1}(\cos\psi,\sin\psi)))$,
then $P$ is still a Euclidean $2$-plane. By \prettyref{eq:cosin}
we have \begin{equation}
\mathcal{C_{T}}(f)(P)=\cos^{2}\psi+\sin^{2}\psi+2|\cos\psi\sin\psi|=(|\cos\psi|+|\sin\psi|)^{2}.\end{equation}
On the other hand, the rectangle spanned by $(\cos\psi,\sin\psi)$
and $\sqrt{-1}(\cos\psi,\sin\psi)$ has Holmes Thompson area $(|\cos\psi|+|\sin\psi|)^{2}$
and so is its Klain function at $P$. 
\end{example}
The double integral \prettyref{eq:di} can be transformed into an
elliptic integral,\begin{equation}
\begin{array}{lll}
I & = & 4\int_{0}^{2\pi}\sqrt{\cos^{2}\psi\sin^{2}(\theta+\psi)-\sin\theta\sin(\theta+2\psi)\sin^{2}x}dx\\
 & = & 4\cos\psi\sin(\theta+\psi)\int_{0}^{2\pi}\sqrt{1-\frac{\sin\theta\sin(\theta+2\psi)}{\cos^{2}\psi\sin^{2}(\theta+\psi)}\sin^{2}x}dx.\end{array}\label{eq:ell}\end{equation}
Assume $\theta+2\psi\leq\pi$ and let $k^{2}=\frac{\sin\theta\sin(\theta+2\psi)}{\cos^{2}\psi\sin^{2}(\theta+\psi)}$,
then by the series expansion of the incomplete elliptic integral of
the second kind,\begin{equation}
\begin{array}{lll}
I & = & 4\cos\psi\sin(\theta+\psi)\frac{\pi}{2}\sum_{m=0}^{\infty}\frac{1}{1-2m}\left(\begin{array}{c}
-\frac{1}{2}\\
m\end{array}\right)^{2}k^{2m}\\
 & = & 4\cos\psi\sin(\theta+\psi)\frac{\pi}{2}\sum_{m=0}^{\infty}\frac{1}{1-2m}\left(\begin{array}{c}
-\frac{1}{2}\\
m\end{array}\right)^{2}(\frac{\sin\theta\sin(\theta+2\psi)}{\cos^{2}\psi\sin^{2}(\theta+\psi)})^{m}\\
 & = & \pi(\sin(\theta+2\psi)+\sin\theta)\sum_{m=0}^{\infty}\frac{1}{1-2m}\left(\begin{array}{c}
-\frac{1}{2}\\
m\end{array}\right)^{2}\frac{2^{2m}\sin^{m}\theta\sin^{m}(\theta+2\psi)}{(\sin(\theta+2\psi)+\sin\theta)^{2m}}.\end{array}\label{eq:ser}\end{equation}
So we have

\begin{equation}
\begin{array}{lll}
Kl_{HT^{2}}(P) & = & \frac{1}{4\pi}I+|\cos\psi\cos(\theta+\psi)|+|\sin\psi\sin(\theta+\psi)|\\
 & = & \frac{1}{4}(\sin(\theta+2\psi)+\sin\theta)\sum_{m=0}^{\infty}\frac{1}{1-2m}\left(\begin{array}{c}
-\frac{1}{2}\\
m\end{array}\right)^{2}\frac{2^{2m}\sin^{m}\theta\sin^{m}(\theta+2\psi)}{(\sin(\theta+2\psi)+\sin\theta)^{2m}}\\
 &  & \qquad+max(|\cos\theta|,|\cos(2\psi+\theta)|).\end{array}\end{equation}

\section{The Picture on the Two Spheres }

In this section, we are going to describe the cosine transform of
the torus invariant $l^{2}$ space's singular measure in terms of
the picture of Gluck and Warner's decomposition of the Grassmannian
$Gr_{2}(\mathbb{R}^{4})$, \prettyref{eq:iso}, and Goody and Howard's
kernel characterization in \cite{GH}.

First, we can find the following correspondences under the isomorphism
\prettyref{eq:map} in terms of the spherical coordinates \prettyref{eq:p1}
and \prettyref{eq:p2}: $\mathbb{C}\times\left\{ 0\right\} $ corresponds
to $(0,0,1)$ on the first sphere $S_{+}^{2}$ and $(0,0,1))$ on
the second sphere $S_{-}^{2}$, $\left\{ 0\right\} \times\mathbb{C}$
to $(0,0,1)$ on the first sphere $S_{+}^{2}$ and $(0,0,-1))$ on
the second sphere $S_{-}^{2}$, and the torus $\mathbb{T}$ in \prettyref{eq:tor}
to the torus $S_{+}^{1}\times S_{-}^{1}$, where $S_{+}^{1}$ and
$S_{-}^{1}$ are equators of $S_{+}^{2}$ and $S_{-}^{2}$ whose $b_{1}^{+}$
and $b_{1}^{-}$ components in \prettyref{eq:1c} and \prettyref{eq:2c}
are zeros.

For any fixed point $(\xi(x,0),\eta(y,0))\in S_{+}^{2}\times S_{-}^{2}$
representing a plane $P$ in the orbit of $Gr_{2}(\mathbb{R}^{4})$
acted by torus actions, then we have\begin{equation}
\begin{array}{lll}
|\langle P,Q\rangle| & = & |\langle(\xi(x,0),\eta(y,0)),(\xi(0,\phi_{1}),\eta(0,\phi_{2}))\rangle|\\
 & = & |\cos\phi_{1}\sqrt{1-x^{2}}+\cos\phi_{2}\sqrt{1-y^{2}}|\end{array}\end{equation}
for any $(\xi(0,\phi_{1}),\eta(0,\phi_{2}))\in S_{+}^{1}\times S_{-}^{1}$
representing a plane $Q$ in torus $\mathbb{T}$ in \prettyref{eq:tor}.
So the integral in \prettyref{eq:di} appears as 

\begin{equation}
I':=\int_{0}^{2\pi}\int_{0}^{2\pi}|\cos\phi_{1}\sqrt{1-x^{2}}+\cos\phi_{2}\sqrt{1-y^{2}}|d\phi_{1}d\phi_{2}\label{eq:einteg}\end{equation}
in this picture. 

Moreover, by \prettyref{eq:p1} and \prettyref{eq:p2}, \begin{equation}
|\langle P,\mathbb{C}\times\left\{ 0\right\} \rangle|=|x+y|\label{eq:pl}\end{equation}
 and \begin{equation}
|\langle P,\left\{ 0\right\} \times\mathbb{C}\rangle|=|x-y|,\label{eq:mi}\end{equation}
and then\begin{equation}
|\langle P,\mathbb{C}\times\left\{ 0\right\} \rangle|+|\langle P,\left\{ 0\right\} \times\mathbb{C}\rangle|=2max(|x|,|y|).\label{eq:2ends}\end{equation}

So the Klain function of $HT^{2}$ can be also expressed as\begin{equation}
Kl_{HT^{2}}(P)=\frac{1}{4\pi}\int_{0}^{2\pi}\int_{0}^{2\pi}|\cos\phi_{1}\sqrt{1-x^{2}}+\cos\phi_{2}\sqrt{1-y^{2}}|d\phi_{1}d\phi_{2}+2max(|x|,|y|)\end{equation}
for any $(\xi(x,0),\eta(y,0))\in S_{+}^{2}\times S_{-}^{2}$.

The term $max(|x|,|y|)$ can be expanded as a Legendre series in ascending
total degree of the Legendre polynomial of $x$ and $y$, \begin{equation}
\begin{array}{lll}
max(|x|,|y|) & = & \frac{8}{3}+\frac{4}{15}(p_{2}(x)+p_{2}(y))-\frac{8}{105}p_{2}(x)p_{2}(y)\\
 &  & +\frac{4}{315}(p_{2}(x)p_{4}(y)+p_{4}(x)p_{2}(y))-\frac{8}{693}p_{4}(x)p_{4}(y)+\cdots\end{array}\end{equation}
Furthermore, it is not hard to rigorously show that $max(|x|,|y|)$
has only the components of $p_{m}(x)p_{n}(y)$, in which $m$ and
$n$ are even and $|m-n|=0$ or $2$, in its Legendre series expansion.
By Lemma 3.4 in \cite{GH},\begin{equation}
\int_{-1}^{1}\int_{-1}^{1}|x+y|p_{m}(x)p_{n}(y)dxdy=0\end{equation}
for $m$ and $n$ even and $|m-n|=0$ or $2$, and by substitution
\begin{equation}
\int_{-1}^{1}\int_{-1}^{1}|x+y|p_{m}(x)p_{n}(y)dxdy=\int_{-1}^{1}\int_{-1}^{1}|x-y|p_{m}(x)p_{n}(y)dxdy\end{equation}
for $m$ and $n$ even, Therefore, \begin{equation}
\int_{-1}^{1}\int_{-1}^{1}max(|x|,|y|)p_{m}(x)p_{n}(y)dxdy=0\end{equation}
 for $m$ and $n$ even and $|m-n|=0$ or $2$.

To evaluate \prettyref{eq:einteg}, we let $\sin\theta:=\sqrt{1-x^{2}}$,
$\sin\phi:=\sqrt{1-y^{2}}$, $\phi_{1}:=\alpha-\beta$ and $\phi_{2}:=\alpha+\beta$,
then

\begin{equation}
\begin{array}{lll}
I' & = & \int_{0}^{2\pi}\int_{0}^{2\pi}|\sin\theta\cos\phi_{1}+\sin\phi\cos\phi_{2}|d\phi_{1}d\phi_{2}\\
 & = & \int_{0}^{2\pi}\int_{0}^{2\pi}|\sin\theta\sin\phi_{1}+\sin\phi\sin\phi_{2}|d\phi_{1}d\phi_{2}\\
 & = & 2\int_{0}^{2\pi}\int_{0}^{2\pi}|\sin\frac{\phi+\theta}{2}\cos\frac{\phi-\theta}{2}\sin\alpha\cos\beta+\cos\frac{\phi+\theta}{2}\sin\frac{\phi-\theta}{2}\cos\alpha\sin\beta|d\alpha d\beta,\end{array}\end{equation}
which becomes the double integral \prettyref{eq:di} if we let $\phi:=2\psi+\theta$,
which can be transformed into the elliptic integral \prettyref{eq:ell}
and furthermore the series \prettyref{eq:ser}, which by back-substitution
turns out to be\begin{equation}
\begin{array}{lll}
I' & = & 2\pi(\sqrt{1-x^{2}}+\sqrt{1-y^{2}})\sum_{m=0}^{\infty}\frac{1}{1-2m}\left(\begin{array}{c}
-\frac{1}{2}\\
m\end{array}\right)^{2}\frac{2^{2m}\sqrt{(1-x^{2})^{m}(1-y^{2})^{m}}}{(\sqrt{1-x^{2}}+\sqrt{1-y^{2}})^{2m}}\\
 & = & \sum_{m=0}^{\infty}\frac{2^{2m+1}\pi}{1-2m}\left(\begin{array}{c}
-\frac{1}{2}\\
m\end{array}\right)^{2}\frac{\sqrt{(1-x^{2})^{m}(1-y^{2})^{m}}}{(\sqrt{1-x^{2}}+\sqrt{1-y^{2}})^{2m-1}}.\end{array}\end{equation}

\section{a characterization of the Klain Function}

However, we can give a characterization of the Klain function as the
following 
\begin{thm}
\label{thm:The-Klain-functionl1}The Klain function of the second
Holmes-Thompson valuation in complex $l^{1}$ space is \begin{equation}
\sum_{\begin{array}{c}
|k-l|=0\,\mbox{or}\,1\end{array}}c_{k,l\,}p_{2k}(x)p_{2l}(y)\end{equation}
 for some $c_{k,l}\in\mathbb{R}$.
\end{thm}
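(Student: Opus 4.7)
The plan is to combine the explicit formula for the Klain function obtained in the previous section with the image description from \prettyref{cor:The-image-of}, and then restrict the indices to even values by a reflection argument in each variable.

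First, I would recall the explicit expression
\[
Kl_{HT^2}(P)=\frac{1}{4\pi}I'+2\max(|x|,|y|),
\]
with
\[
I'=\int_0^{2\pi}\!\!\int_0^{2\pi}\bigl|\cos\phi_1\sqrt{1-x^2}+\cos\phi_2\sqrt{1-y^2}\bigr|\,d\phi_1\,d\phi_2,
\]
valid for $(x,y)\in[-1,1]^2$ in the coordinates from \prettyref{eq:p1} and \prettyref{eq:p2}. Equivalently, $Kl_{HT^2}=\mathcal{C_T}(\mu)$ for the torus-invariant Crofton measure $\mu=\delta_{\mathbb{C}\times\{0\}}+\delta_{\{0\}\times\mathbb{C}}+\frac{1}{4\pi}\lambda_{\mathbb{T}}$ introduced earlier for the complex $l^{1}$ metric.

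Next, I would invoke (the distributional extension of) \prettyref{cor:The-image-of}, applied to the torus-invariant $\mu$, to obtain a Legendre expansion
\[
Kl_{HT^2}(x,y)=\sum_{|m-n|\in\{0,2\}}c_{m,n}\,p_m(x)\,p_n(y),
\]
which already enforces $m\equiv n\pmod 2$ and $|m-n|\le 2$. I would then pin down the parity of each index as even by inspecting the symmetries of the explicit formula: both summands depend on $x$ only through $|x|$ and $\sqrt{1-x^2}$, and on $y$ only through $|y|$ and $\sqrt{1-y^2}$, hence $Kl_{HT^2}(-x,y)=Kl_{HT^2}(x,-y)=Kl_{HT^2}(x,y)$. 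Using $p_j(-t)=(-1)^j p_j(t)$ and orthogonality of the products $p_m(x)p_n(y)$ on $[-1,1]^2$, this forces $c_{m,n}=0$ whenever $m$ or $n$ is odd. Writing $m=2k$ and $n=2l$, the constraint $|m-n|\in\{0,2\}$ becomes precisely $|k-l|\in\{0,1\}$, which is the claimed form.

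The main obstacle I anticipate is the use of \prettyref{cor:The-image-of} for the singular measure $\mu$: the corollary is phrased for densities $f\in L^{2}(Gr_{2}(\mathbb{C}^{2}))$, whereas $\mu$ has atomic masses at the two coordinate complex lines and a piece $\lambda_{\mathbb{T}}$ supported on the codimension-one torus $\mathbb{T}$. I would handle this by approximating $\mu$ weak-$\ast$ by smooth torus-invariant densities $f_{\epsilon}$ (for instance by convolving $\mu$ with a bi-invariant approximate identity on the Grassmannian), applying the corollary to each $f_{\epsilon}$, and then passing to the limit in each individual Legendre coefficient, which is a continuous functional of the input density. An alternative route handles the three pieces of $\mu$ separately: the atomic contributions $|x+y|=|\langle P,\mathbb{C}\times\{0\}\rangle|$ and $|x-y|=|\langle P,\{0\}\times\mathbb{C}\rangle|$ fall directly under the Goodey--Howard Lemma~3.4 of \cite{GH} already invoked above, and the continuous function $I'$ can be expanded from the elliptic-integral series \prettyref{eq:ser} in a way that manifestly exhibits the same parity constraint.
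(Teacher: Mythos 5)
Your argument is correct and follows essentially the same route as the paper: the paper likewise combines membership of $Kl_{HT^{2}}$ in the image of $\mathcal{C_{T}}$ with the Goodey--Howard kernel description and the self-adjointness of $\mathcal{C_{T}}$ (the same facts packaged in \prettyref{cor:The-image-of}), and then uses evenness of the Klain function to force both indices to be even. The only real difference is that you worry explicitly about applying the $L^{2}$ image statement to the singular measure $\mu$, whereas the paper sidesteps this by remarking that self-adjointness extends to distributions and pairing $Kl_{HT^{2}}$ directly against the kernel elements $p_{m}(x)p_{n}(y)$, $|m-n|>2$.
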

To prove the above theorem, let's show the following lemma first.
\begin{lem}
The cosine transform is a self-adjoint operator on the space of torus
invariant functions \begin{equation}
\mathcal{F}:=\left\{ f:Gr_{2}(\mathbb{C}^{2})\rightarrow\mathbb{R}:f\,\mathcal{\mbox{is }U}(1)\times\mathcal{U}(1)\,\mbox{-invariant}\right\} \end{equation}
\end{lem}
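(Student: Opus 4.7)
The plan is to reduce self-adjointness of $\mathcal{C_{T}}$ on $\mathcal{F}$ to the manifest symmetry of its integral kernel, preceded by a short verification that $\mathcal{C_{T}}$ preserves torus-invariance so that the statement is meaningful as a claim about operators on $\mathcal{F}$. Throughout I would take the inner product on $\mathcal{F}$ to be the $L^{2}$ pairing with respect to the Haar measure on $Gr_{2}(\mathbb{C}^{2})$, which is $U(1)\times U(1)$-invariant.

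The first step is to check $\mathcal{C_{T}}(\mathcal{F})\subset\mathcal{F}$. For any $\gamma=(e^{\sqrt{-1}\alpha},e^{\sqrt{-1}\beta})\in U(1)\times U(1)$ and any $f\in\mathcal{F}$, performing the change of variable $Q=\gamma\cdot Q'$ in the defining integral yields
\begin{equation}
\mathcal{C_{T}}(f)(\gamma\cdot P)=\int_{Q'\in Gr_{2}(\mathbb{C}^{2})}|\langle\gamma\cdot P,\gamma\cdot Q'\rangle|\,f(\gamma\cdot Q')\,dQ'.
\end{equation}
The three ingredients I need are (i) torus-invariance of the Haar measure on $Gr_{2}(\mathbb{C}^{2})$, (ii) torus-invariance of the Plücker pairing $\langle\cdot,\cdot\rangle$, which holds because $U(1)\times U(1)\subset U(2)\subset O(4)$ acts on $\mathbb{C}^{2}\cong\mathbb{R}^{4}$ by real-orthogonal maps, and orthogonal transformations act by isometries of the induced inner product on $\wedge^{2}\mathbb{R}^{4}$, and (iii) torus-invariance of $f$ itself. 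Combining these three facts gives $\mathcal{C_{T}}(f)(\gamma\cdot P)=\mathcal{C_{T}}(f)(P)$, hence $\mathcal{C_{T}}(f)\in\mathcal{F}$.

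The second step is Fubini applied to the symmetric kernel $|\langle P,Q\rangle|=|\langle Q,P\rangle|$, a symmetry inherited directly from the symmetry of the Plücker pairing. For $f,g\in\mathcal{F}$,
\begin{equation}
\langle\mathcal{C_{T}}f,g\rangle=\int_{P}\!\int_{Q}|\langle P,Q\rangle|\,f(Q)\,g(P)\,dQ\,dP=\int_{Q}f(Q)\!\int_{P}|\langle Q,P\rangle|\,g(P)\,dP\,dQ=\langle f,\mathcal{C_{T}}g\rangle.
\end{equation}
The application of Fubini is justified because $|\langle P,Q\rangle|\leq 1$ is bounded and the Grassmannian carries finite Haar mass, so the iterated absolute integral is dominated by a constant multiple of $\|f\|_{L^{2}}\|g\|_{L^{2}}$.

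The only mildly delicate point is the invariance of the Plücker pairing under the torus action; once one observes that the diagonal torus sits inside $O(4)$ and that induced actions on the second exterior power preserve the induced pairing, the remaining content is entirely bookkeeping around Fubini and invariance of measures. I do not anticipate any serious obstacle beyond making this measure-theoretic Fubini step rigorous, which only uses boundedness of the kernel and the $L^{2}$ hypothesis on $f$ and $g$.
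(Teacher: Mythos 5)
Your proof is correct and follows essentially the same route as the paper: self-adjointness via Fubini's theorem applied to the symmetric kernel $|\langle P,Q\rangle|$. Your preliminary verification that $\mathcal{C_{T}}$ preserves torus-invariance and your explicit justification of Fubini are sensible additions, but the core argument is identical to the paper's.
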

\begin{proof}
By Fubini's theorem,\begin{equation}
\begin{array}{lll}
\mathcal{\langle\mathcal{C_{T}}}(f),g\rangle & = & \int_{P\in Gr_{2}(\mathbb{C}^{2})}g(P)(\int_{Q\in Gr_{2}(\mathbb{C}^{2})}|\langle P,Q\rangle|f(Q)dQ)dP\\
 & = & \int_{Q\in Gr_{2}(\mathbb{C}^{2})}f(Q)(\int_{P\in Gr_{2}(\mathbb{C}^{2})}|\langle P,Q\rangle|g(P)dP)dQ\\
 & = & \mathcal{\langle}f,\mathcal{\mathcal{C_{T}}}(g)\rangle\end{array}\label{eq:selffor}\end{equation}
for any $f,g\in\mathcal{F}$.\end{proof}
\begin{rem}
The claim holds more generally for the space of all functions on $Gr_{2}(\mathbb{C}^{2})$
and furthermore distributions.
\end{rem}
Now let's prove \prettyref{thm:The-Klain-functionl1}.
\begin{proof}
First we know that $ker(\mathcal{C_{T}})$ is orthogonal to $Im(\mathcal{C_{T}})$,
because, by \prettyref{eq:selffor}, $\mathcal{\langle}f,\mathcal{\mathcal{C_{T}}}(g)\rangle=0$
for any $f\in ker(\mathcal{C_{T}})$ and any $g\in\mathcal{F}$.

On the other hand, by Lemma 3.3 in \cite{GH}, we know that $p_{m}(x)p_{n}(y)\in ker(\mathcal{C_{T}})$
for $m-n$ even and $|m-n|>2$, and since $Kl_{HT^{2}}\in Im(\mathcal{C_{T}})$,
hence \begin{equation}
\mathcal{\langle}Kl_{HT^{2}},p_{m}(x)p_{n}(y)\rangle=0\end{equation}
for $m-n$ even and $|m-n|>2$, moreover, $Kl_{HT^{2}}$ is an even
function, thus it follows that $Kl_{HT^{2}}$ has only the components
of $p_{m}(x)p_{n}(y)$ for $m$ and $n$ even and $|m-n|=0$ or $2$
in its Legendre polynomial expansion.\end{proof}
\begin{rem}
The double integral \prettyref{eq:einteg} is the cosine transform
of the delta function of $S_{+}^{1}\times S_{-}^{1}$, $\delta_{S_{+}^{1}\times S_{-}^{1}}$
on $S_{+}^{2}\times S_{-}^{2}$, which can be expanded as a series
in terms of Legendre polynomials\begin{equation}
\begin{array}{lll}
\delta_{S_{+}^{1}\times S_{-}^{1}}(x,y) & = & \delta(x)\delta(y)\\
 & = & \sum_{k=0}^{\infty}\sum_{l=0}^{\infty}\frac{p_{2k}(0)p_{2l}(0)}{\langle p_{2k}(x),p_{2k}(x)\rangle\langle p_{2l}(y),p_{2l}(y)\rangle}p_{2k}(x)p_{2l}(y)\\
 & = & \sum_{k=0}^{\infty}\sum_{l=0}^{\infty}\frac{(4k+1)(4l+1)}{4}\left(\begin{array}{c}
-\frac{1}{2}\\
k\end{array}\right)\left(\begin{array}{c}
-\frac{1}{2}\\
l\end{array}\right)p_{2k}(x)p_{2l}(y).\end{array}\label{eq:del}\end{equation}
Basically, the cosine transform annihilates the components of $p_{2k}(x)p_{2l}(y)$,
$|k-l|>1$ in \prettyref{eq:del}, and maps the components of $p_{2k}(x)p_{2l}(y)$,
$|k-l|=0$ or $1$, to those components, in other words, the cosine
transform has an invariant subspace \begin{equation}
\mathcal{G}:=\left\{ \sum_{\begin{array}{c}
|k-l|=0\,\mbox{or}\,1\end{array}}c_{k,l\,}p_{2k}(x)p_{2l}(y):c_{k,l}\in\mathbb{R}\right\} .\label{eq:g}\end{equation}

More generally, the proof for \prettyref{thm:The-Klain-functionl1}
works for any distribution on $Gr_{2}(\mathbb{C}^{2})$, so we have
the following\end{rem}
\begin{thm}[Image of cosine transform on distributions]
The image of the cosine transform on the space $D'(Gr_{2}(\mathbb{C}^{2}))$
that consists of torus invariant distributions on $Gr_{2}(\mathbb{C}^{2})$
is the space $\mathcal{G}$ in \prettyref{eq:g}. \end{thm}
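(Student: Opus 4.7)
The plan is to upgrade the proof of Theorem~\prettyref{thm:The-Klain-functionl1} from the single Klain function $Kl_{HT^{2}}$ to arbitrary torus-invariant distributions, leaning on the distributional self-adjointness announced in the remark following the self-adjointness lemma and on the kernel description from Theorem~\prettyref{thm:The-kernel-of}.

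First I would extend $\mathcal{C_{T}}$ to $D'(Gr_{2}(\mathbb{C}^{2}))$ by duality, setting $\langle\mathcal{C_{T}}(T),\phi\rangle:=\langle T,\mathcal{C_{T}}(\phi)\rangle$ for smooth torus-invariant test functions $\phi$; the symmetry of the kernel $|\langle P,Q\rangle|$ makes this well-defined, and self-adjointness persists by construction. The inclusion $im(\mathcal{C_{T}})\subset\mathcal{G}$ then follows immediately: for any torus-invariant $T$ and any test function $\phi_{k,l}:=p_{2k}(x)p_{2l}(y)$ with $|k-l|>1$, Theorem~\prettyref{thm:The-kernel-of} gives $\mathcal{C_{T}}(\phi_{k,l})=0$, so
\[
\langle\mathcal{C_{T}}(T),\phi_{k,l}\rangle=\langle T,\mathcal{C_{T}}(\phi_{k,l})\rangle=0,
\]
forcing every Legendre coefficient of $\mathcal{C_{T}}(T)$ outside $\mathcal{G}$ to vanish.

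For the reverse inclusion $\mathcal{G}\subset im(\mathcal{C_{T}})$ I would exploit the invariance of $\mathcal{G}$ under $\mathcal{C_{T}}$ emphasized in the preceding remark. The $SO(4)$-invariance of $|\langle P,Q\rangle|$ translates, under the Gluck--Warner isomorphism \prettyref{eq:iso}, into $SO(3)\times SO(3)$-equivariance of $\mathcal{C_{T}}$ on $S_{+}^{2}\times S_{-}^{2}$, and Schur's lemma then forces $\mathcal{C_{T}}$ to act as a scalar $\lambda_{k,l}$ on each zonal basis element $p_{2k}(x)p_{2l}(y)$. On those with $|k-l|\leq1$ this scalar must be nonzero, for otherwise the element would belong to $ker(\mathcal{C_{T}})$, contradicting Theorem~\prettyref{thm:The-kernel-of}. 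Hence $\mathcal{C_{T}}|_{\mathcal{G}}$ is a diagonal operator with nonzero entries, and any $g=\sum c_{k,l}\,p_{2k}(x)p_{2l}(y)\in\mathcal{G}$ equals $\mathcal{C_{T}}(T)$ for the formal series $T=\sum(c_{k,l}/\lambda_{k,l})\,p_{2k}(x)p_{2l}(y)$.

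The main obstacle I anticipate is the distributional bookkeeping in this second half: I must check that the formal preimage defines an honest distribution on $Gr_{2}(\mathbb{C}^{2})$. This amounts to controlling the growth of the sequence $\lambda_{k,l}^{-1}$ against the polynomial-growth requirement for Legendre coefficients of distributions on a compact homogeneous space. A direct asymptotic estimate of $\lambda_{k,l}$ from the explicit integral of $|\langle P,Q\rangle|$ against $p_{2k}(x)p_{2l}(y)$, in the spirit of the Funk--Hecke computations used in \cite{GH}, should provide the required lower bound and thereby complete the proof.
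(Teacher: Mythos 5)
Your proposal is correct in outline but is genuinely more ambitious than what the paper actually does: the paper's ``proof'' of this theorem is the single sentence that the argument for Theorem~\prettyref{thm:The-Klain-functionl1} ``works for any distribution,'' which only yields the inclusion $im(\mathcal{C_{T}})\subset\mathcal{G}$ (orthogonality of the image to the kernel via self-adjointness plus the Goodey--Howard kernel description); the reverse inclusion is never argued beyond the remark that $\mathcal{G}$ is an invariant subspace. Your first half reproduces the paper's mechanism, transposed to distributions by duality, which is fine. Your second half --- Funk--Hecke/Schur diagonalization of $\mathcal{C_{T}}$ on the zonal products $p_{2k}(x)p_{2l}(y)$, nonvanishing of the eigenvalues $\lambda_{k,l}$ for $|k-l|\leq1$ because those products are not in the kernel, and inversion coefficient by coefficient --- is exactly the missing content, and the obstacle you flag is the real one: surjectivity onto all of $\mathcal{G}$ holds precisely because the $\lambda_{k,l}$ decay only polynomially, so $\lambda_{k,l}^{-1}c_{k,l}$ still satisfies the polynomial-growth criterion for distributional Legendre coefficients. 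That estimate must actually be carried out (it is the reason the theorem is stated for $D'$ rather than for continuous or $L^{2}$ functions, where the analogous surjectivity would fail), so as written your proof is a correct plan with one acknowledged, fillable gap rather than a complete argument.

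Two smaller points. First, your forward inclusion only tests $\mathcal{C_{T}}(T)$ against even-index products $p_{2k}(x)p_{2l}(y)$ with $|k-l|>1$; to land in $\mathcal{G}$, which contains no odd-index terms, you must also exclude components $p_{m}(x)p_{n}(y)$ with $m,n$ odd and $|m-n|\leq2$. The paper handles this for $Kl_{HT^{2}}$ by invoking evenness of that particular function, an argument that does not transfer to an arbitrary torus-invariant distribution, so either you need a separate parity argument for $\mathcal{C_{T}}(T)$ or the statement of $\mathcal{G}$ itself needs the odd-odd terms adjoined; your approach at least makes this issue visible, which the paper's does not. Second, when you invert on $\mathcal{G}$ you should say in what topology the formal series $T=\sum(c_{k,l}/\lambda_{k,l})p_{2k}(x)p_{2l}(y)$ converges and why $\mathcal{C_{T}}$ (extended by duality) may be applied term by term; weak-$*$ convergence plus continuity of the dual extension suffices, but it should be stated.
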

\begin{acknowledgement*}
Thanks to Joe Fu for illuminating discussions and suggestions.\end{acknowledgement*}

\address{Department of Mathematics, University of Georgia, Athens, GA 30602}

\email{yliu@math.uga.edu}
\end{document}